\newtheorem{theorem}{Theorem}[section]
\newtheorem{statement}[theorem]{Statement}
\newtheorem*{Vedenisov's lemma}{Vedenisov's lemma}
\newtheorem{corollary}[theorem]{Corollary}
\theoremstyle{definition}
\newtheorem{definition}[theorem]{Definition}
\newtheorem{remark}[theorem]{Remark}
\newtheorem{construction}[theorem]{Construction}
\newcommand\ind{\operatorname{ind}}
\newcommand\Ind{\operatorname{Ind}}
\newcommand\Fr{\operatorname{Fr}}
\newcommand\st{\operatorname{st}}
\newcommand\diam{\operatorname{diam}}
\begin{document}

\title{On a Problem of Arkhangel'skii}

\author{I.~M.~Leibo}
\address{Moscow Center for Continuous Mathematical Education, Moscow, Russia}
\email{imleibo@mail.ru}

\begin{abstract}
The coincidence of the  $\Ind$ and $\dim$ dimensions for first countable paracompact $\sigma$-spaces is proved.
As a corollary, the equality $\Ind X= \dim X$  for every Nagata (that is, first countable
stratifiable) space $X$ is obtained. This gives a positive answer to A.~V.~Ar\-khan\-gel'\-skii's  question of whether
the dimensions $\ind$, $\Ind$, and $\dim$ coincide for first countable spaces with a countable network.
\end{abstract}

\keywords
{Dimension, network, $\sigma$-space, stratifiable space.}

\markboth{I.~M.~Leibo}{On a Problem of Arkhangel'skii}

\maketitle

\section*{Introduction}

Throughout the paper, all spaces under consideration are assumed to be normal
$T_1$-spaces with finite dimension $\Ind$, unless otherwise stated. By virtue of Vedenisov's inequality
\cite{1}, they also have finite dimension~$\dim$.

An important role in this paper is played by the notion of a network introduced by Arkhangel'skii \cite{2},~\cite{3}.

A family $\mu$ of subsets of a space $X$ is called a \emph{network} if,
for each point $x\in X$ and any neighborhood $Ox$ of $x$, there exists an element $F\in \mu$ such
that $x\in F\subseteq Ox$.

  If $\mu$ is a  network  in a regular space $X$, then the closures of elements of $\mu$ also form a network.
This allows us to consider only \emph{closed networks}, i.e., networks consisting of closed sets. Therefore, in
what follows, we assume all networks under consideration to be closed, unless otherwise stated.

There exist three classical topological dimension functions, $\Ind$, $\dim$, and $\ind$ (see, e.g.,
\cite{1}); relationships between them for various classes of spaces is of special interest in dimension theory.
L.~A.~Tumarkin and W.~Hurewicz proved the equalities $\ind X=\dim X=\Ind X$ for a space $X$ with a countable base
\cite[Chap.~4, Sec.~8, Subsec.~2]{1}. Since every base is a network, it is quite natural to ask whether the
dimensions of a space with a countable network coincide; this question was posed by Arkhangel'skii in
\cite[Sec.~5]{2}. In the general case, the answer is negative. A space $X$ with a countable network for which
$\dim X=1$ but  $\Ind X=\ind X=2$ was constructed by Charalambous \cite{4} (note that the dimensions $\Ind$ and
$\ind$ are equal for any such space). When stating his coincidence problem, Arkhangel'skii did not exclude the
possibility that a counterexample may exist. Therefore, he simultaneously stated the dimension coincidence
problem for spaces with a countable network which satisfy certain additional conditions, the most interesting of
which are the presence of a topological group structure and the first axiom of countability (i.e., the existence
of a countable neighborhood base at every point). The question of whether $\ind G= \dim G= \Ind G$ for any
topological group $G$ with a countable network still remains open (for more than 50 years now).

In this paper we give a positive answer to Arkhangel'skii's question of whether $\Ind X=\dim X=\ind X$ for any
first countable space with a countable network.

In \cite{2} Arkhangel'skii introduced the class of spaces with a $\sigma$-discrete network. After that, is was
quite natural to consider the classes of spaces with a $\sigma$-locally finite network and with a
$\sigma$-closure preserving network. It turned out that these classes of spaces coincide; the spaces in these
classes are called \emph{$\sigma$-spaces}. Properties of $\sigma$-spaces and paracompact
$\sigma$-spaces were described in detail in Arkhangel'skii's paper~\cite{3}.

In this paper we also prove that $\dim X = \Ind X$ for any first countable paracompact
$\sigma$-space $X$.

By virtue of Arkhangel'skii's theorem \cite[Theorem~2.8]{2} any first countable paracompact $\sigma$-space $X$
is semimetrizable, i.e., there exists a semimetric on $X$ which generates the topology of~$X$.

For a first countable paracompact $\sigma$-space $X$ satisfying a certain additional condition, namely, the
existence of a semimetric continuous with respect to one variable and generating the topology of $X$, a
positive answer to Arkhangel'skii's question was given by the author in \cite{5}. For first countable
stratifiable spaces, the answer is positive as well; this result was announced by the author in 2019~\cite{6}.

\section{Basic Definitions, Known Facts, and Auxiliary Results}

The main new results of this paper are Theorems~\ref{t2.1} and~\ref{t2.3}  and Corollaries~\ref{c2.2}, \ref{c2.4},
\ref{c2.5}, and \ref{c2.6}. To obtain them, we need the following definitions, theorems, and statements.

\begin{definition}
\label{d1.1}
Given a normal space $Z$ of finite dimension $\Ind Z=n$,  we say that a closed subset $F$ of $Z$ and its open
neighborhood $OF$ \emph{determine the dimension} $\Ind Z$ of $Z$ if, for any open neighborhood $U$ of $F$ such that
$F \subseteq U \subseteq \overline{U}\subseteq OF$, we have $\Ind\operatorname{Fr}U \ge n-1$.
\end{definition}

\begin{definition}
\label{d1.2}
A family $\varphi=\{F_{\alpha}, OF_{\alpha}\}_{\alpha \in A}$ of closed  subsets $F_{\alpha}$
and their open neighborhoods $OF_{\alpha}$ in a normal space $Z$ is called an \emph{everywhere f-system} (an \emph{f-system})
if the family $\{OF_{\alpha}\}_{\alpha \in A}$ is $\sigma$-discrete and, given any (any closed) set $M \subseteq
Z$, there exists an index $\alpha_0 \in A$ for which $M \cap F_{\alpha_0}$ and $M\cap OF_{\alpha_0}$
determine the dimension $\Ind M$.
\end{definition}

We will write the system \{$F_{\alpha}$$\cap$M, $OF_{\alpha}$$\cap$M\} as $\varphi$$\cap$M. Throughout the paper,  given a subset $M$ of a topological space $X$, a system of subsets of $X$ of the form $\nu=\{F_\alpha: \alpha\in A\}$  we will write $\nu$$\cap$$M$ for \{$F_\alpha$$\cap$ M: $\alpha \in A\}$. Given two covers $\omega$ and $\lambda$, $\omega$$\succ$$\lambda$ means that $\omega$ is a refinement of $\lambda$. Considering a metric space, by $O_r$$x$ we denote the open ball of radius $r$ centered at $x$.

Note that if the space $Z$ in Definition~1.2 is a paracompact $\sigma$-space, then it suffices to require the family
$\{OF_{\alpha}\}_{{\alpha}{\in}{A}}$ to be $\sigma$-locally finite in~$Z$. This is implied by the following
statement.

\begin{statement}[{\cite[Corollary~1.11]{5}}]
\label{s1.3}
Let $X$ be a paracompact $\sigma$-space. Suppose that there exists
an f-system $\varphi=\{F_{\alpha}, OF_{\alpha}\}_{\alpha \in A}$ of closed subsets $F_{\alpha}$ of $X$ and
their open neighborhoods $OF_{\alpha}$ in $X$ such that the family $\{OF_{\alpha}\}$ is $\sigma$-locally finite
in $X$. Then $\dim X=\Ind X$.
\end{statement}

\begin{definition}[\cite{5}]
\label{d1.4}
A space having an everywhere f-system is called an \emph{everywhere f-space}, and
a space having an f-system is called an \emph{f-space}.
\end{definition}

 Using the notion of an f-space, we can state the following sufficient condition for the coincidence of the
dimensions $\Ind X$ and $\dim X$ of a normal space $X$.

\begin{theorem}[{\cite{5}, \cite[Theorem~1]{15}}]
\label{t1.5}
If $X$ is an f-space, then $\Ind X=\dim X$.
\end{theorem}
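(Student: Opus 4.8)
The plan is to prove the nontrivial inequality $\Ind X \le \dim X$; the reverse inequality $\dim X \le \Ind X$ is Vedenisov's inequality, valid for all normal spaces, so combining the two will give $\Ind X = \dim X$. First I would isolate the hereditary property that drives everything: every closed subspace $M$ of an f-space $X$ is again an f-space. Indeed, if $\varphi=\{F_\alpha, OF_\alpha\}_{\alpha\in A}$ is an f-system on $X$, I claim $\varphi\cap M=\{F_\alpha\cap M, OF_\alpha\cap M\}$ is one on $M$: the traces $OF_\alpha\cap M$ are open in $M$, form a $\sigma$-discrete family, and each $F_\alpha\cap M$ is a closed subset of $M$ with neighborhood $OF_\alpha\cap M$. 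For the determining clause, any closed $N\subseteq M$ is closed in $X$ (since $M$ is closed in $X$), so applying the f-system property of $X$ to $N$ yields an index $\alpha_0$ for which $N\cap F_{\alpha_0}$ and $N\cap OF_{\alpha_0}$ determine $\Ind N$; as $N\subseteq M$, these coincide with $N\cap(F_{\alpha_0}\cap M)$ and $N\cap(OF_{\alpha_0}\cap M)$. This is precisely where Definition~\ref{d1.2} must quantify over \emph{closed} subsets rather than only over the whole space.

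With the hereditary lemma in hand, I would argue by induction on $k$ the statement $P(k)$: every f-space $Y$ with $\dim Y\le k$ satisfies $\Ind Y\le k$. The base $k=-1$ is trivial, since then $Y=\varnothing$. For the inductive step assume $P(k-1)$, let $Y$ be an f-space with $\dim Y\le k$, write $n=\Ind Y$ (finite by the standing hypothesis), and suppose for contradiction that $n\ge k+1$. Applying the f-system to $M=Y$ gives a pair $(F_{\alpha_0}, OF_{\alpha_0})$ determining $\Ind Y=n$. Now I invoke the classical partition theorem for covering dimension: since $\dim Y\le k$, the disjoint closed sets $F_{\alpha_0}$ and $Y\setminus OF_{\alpha_0}$ admit a partition $C$ with $\dim C\le k-1$. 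Writing $Y\setminus C=V\sqcup W$ with $F_{\alpha_0}\subseteq V$ and $Y\setminus OF_{\alpha_0}\subseteq W$, and setting $U=V$, one checks $F_{\alpha_0}\subseteq U\subseteq\overline{U}\subseteq OF_{\alpha_0}$ and $\Fr U\subseteq C$, whence $\dim\Fr U\le\dim C\le k-1$ by monotonicity of $\dim$ on closed subsets of a normal space. But $\Fr U$ is closed in $Y$, hence an f-space, so $P(k-1)$ forces $\Ind\Fr U\le k-1$, while the determining property of $(F_{\alpha_0}, OF_{\alpha_0})$ gives $\Ind\Fr U\ge n-1\ge k$, a contradiction. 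Thus $n\le k$, completing the induction; taking $k=\dim X$ yields $\Ind X\le\dim X$.

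The hard part will be the two imported ingredients that make the induction close up. One is the forward partition lemma for covering dimension, namely that $\dim Y\le k$ permits separating any pair of disjoint closed sets by a partition of dimension $\le k-1$, together with the realization of such a partition as the frontier $\Fr U$ of an open set squeezed between $F_{\alpha_0}$ and $OF_{\alpha_0}$; this is standard dimension theory but must be stated in the correct normal-space generality, and it is only the \emph{forward} implication that holds (the converse is exactly what fails in the Charalambous example). The other, and the conceptual engine, is the hereditary lemma: it is the closed-subspace stability of the f-system that licenses applying the inductive hypothesis to $\Fr U$, and without the closed-set quantifier in Definition~\ref{d1.2} the argument would not descend. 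Everything else is bookkeeping around the strict inequality $n-1\ge k>k-1$.
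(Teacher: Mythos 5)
Your reduction to $\Ind X\le\dim X$, the hereditary lemma (closed subspaces of f-spaces are f-spaces, via the closed-set quantifier in Definition~1.2), and the frontier bookkeeping are all sound; but the proof collapses at the one step you wave through as ``standard dimension theory'': the claim that in a normal space $Y$ with $\dim Y\le k$ every pair of disjoint closed sets admits a partition $C$ with $\dim C\le k-1$. That dimension-lowering partition theorem is a theorem about (separable) metric spaces; for general normal spaces the only valid partition-type characterization of $\dim$ is the Eilenberg--Otto/Hemmingsen one, which requires $k+1$ pairs of closed sets and produces partitions with empty \emph{intersection}, not a single partition of smaller covering dimension. In the normal-space generality you need, the lemma is false, and your own induction shows why it cannot be true: if it held, then $P(k)$ would go through verbatim with ``f-space'' replaced by ``normal space'' --- take disjoint closed $A,B$, get the partition $C$ with $\dim C\le k-1$, realize it as a frontier, apply the inductive hypothesis to the closed (hence normal) subspace $C$ --- proving $\Ind=\dim$ for \emph{every} normal space. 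This contradicts Lokucievskii's compact Hausdorff space with $\dim=1<\Ind=2$, and even the example of Charalambous quoted in the introduction of this paper: a (perfectly normal) space with a countable network, $\dim X=1$, $\Ind X=2$. So the lemma fails exactly in the class of spaces this paper is about, and your parenthetical remark has the directions reversed: it is the \emph{forward} implication that fails in Charalambous's example (no partition of small $\dim$ exists between some pair of closed sets there), not the converse.

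The structural symptom of the same error is that your argument uses the f-system hypothesis inessentially: only to select the pair $(F_{\alpha_0},OF_{\alpha_0})$ and to descend to closed subspaces, both of which could be stripped out of the induction. Since the theorem is false without the f-space hypothesis, any correct proof must use the f-system in an essential way, and that is precisely the content your imported lemma conceals: one must \emph{construct}, inside a space of covering dimension $\le k$, a neighborhood $U$ with $F_{\alpha_0}\subseteq U\subseteq\overline{U}\subseteq OF_{\alpha_0}$ whose frontier has small dimension, and this is done by exploiting the $\sigma$-discreteness of the whole family $\{OF_\alpha\}$ together with Vedenisov's-lemma and sum-theorem arguments of the kind rehearsed in Section~1 (compare Statement~1.3 and Theorem~1.9, where the $\sigma$-discreteness hypothesis is visibly doing the work). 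Note also that this paper does not reprove Theorem~1.5 at all --- it imports it from the references [5] and [15] --- so the machinery needed to fill your gap lives there; what is certain is that it cannot be replaced by a partition theorem for $\dim$ in normal spaces, because no such theorem exists.
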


Note that Statement 1.3. is a complete analogue of Theorem 1.5. for the class of paracompact $\sigma$ spaces.

For a hereditarily normal space, Theorem~\ref{t1.5} can be refined as follows.

\begin{theorem}[{\cite{5}, \cite[Theorem~3]{15}}]
 \label{t1.6}
If $X$ is a hereditarily normal everywhere f-space, then
the following conditions are equivalent:
\begin{enumerate}
 \item[\rm(a)]
$\dim X\leq  n$\textup;
\item[\rm(b)]
$X=\bigcup_{i=1}^n{X_i}$, where $X_i$ is a $G_\delta$ set with $\dim {X_i}\leq 0$ for each $i=0,1,2,\dots,
n$.
\end{enumerate}
\end{theorem}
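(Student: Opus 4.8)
The plan is to prove the two implications separately, after first recording that the hypotheses are hereditary: every subspace $M\subseteq X$ is again hereditarily normal and, restricting the everywhere f-system $\varphi$ of $X$ to $M$ (i.e.\ passing to $\varphi\cap M$), is again an everywhere f-space. Hence by Theorem~\ref{t1.5} we have $\Ind M=\dim M$ for every $M\subseteq X$, so throughout the argument I may compute with $\Ind$ and transfer the conclusions back to $\dim$ at will. In particular $\Ind X=\dim X$, and the nontrivial direction is (a)$\Rightarrow$(b), which I would prove by induction on $n$; the case $n=0$ is immediate with $X_0=X$. The converse (b)$\Rightarrow$(a) I would obtain from the addition theorem $\Ind(A\cup B)\le \Ind A+\Ind B+1$: writing $X=\bigcup_{i=0}^n X_i$ with each $\Ind X_i=\dim X_i\le 0$ and iterating the inequality gives $\Ind X\le n$, hence $\dim X\le n$. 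The admissibility of the addition theorem here is exactly what the hereditary coincidence $\Ind=\dim$ (together with the $G_\delta$, i.e.\ $F_\sigma$-complement, hypothesis) is meant to supply.

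For the inductive step of (a)$\Rightarrow$(b), suppose $\Ind X=\dim X\le n$ and let $\varphi=\{F_\alpha,OF_\alpha\}_{\alpha\in A}$ be the everywhere f-system. For each $\alpha$, since $\Ind X\le n$, I would choose an open set $U_\alpha$ with $F_\alpha\subseteq U_\alpha\subseteq \overline{U_\alpha}\subseteq OF_\alpha$ and $\Ind\Fr U_\alpha\le n-1$, and set $L=\bigcup_{\alpha\in A}\Fr U_\alpha$ and $X_0=X\setminus L$. Because $\{OF_\alpha\}$ is $\sigma$-discrete and each $\Fr U_\alpha\subseteq OF_\alpha$ is closed, the family $\{\Fr U_\alpha\}$ is again $\sigma$-discrete; thus $L$ is an $F_\sigma$-set (so $X_0$ is a $G_\delta$-set), and the $\sigma$-locally finite closed sum theorem yields $\dim L\le n-1$.

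The crux is that $\dim X_0\le 0$, and here the defining property of the everywhere f-system enters. If instead $\Ind X_0=\dim X_0\ge 1$, then applying the f-system to the set $M=X_0$ produces an index $\alpha_0$ for which $X_0\cap F_{\alpha_0}$ and $X_0\cap OF_{\alpha_0}$ determine $\Ind X_0$. But $U_{\alpha_0}\cap X_0$ is an open neighborhood of $X_0\cap F_{\alpha_0}$ in $X_0$ satisfying $X_0\cap F_{\alpha_0}\subseteq U_{\alpha_0}\cap X_0\subseteq \overline{U_{\alpha_0}\cap X_0}^{\,X_0}\subseteq X_0\cap OF_{\alpha_0}$, and its frontier in $X_0$ is contained in $\Fr U_{\alpha_0}\cap X_0\subseteq L\cap X_0=\varnothing$. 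Thus this frontier is empty, with $\Ind$ equal to $-1<\Ind X_0-1$, contradicting that the pair determines the positive dimension $\Ind X_0$. Hence $\dim X_0\le 0$.

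It remains to finish the induction and to secure the $G_\delta$ requirement. Applying the inductive hypothesis to $L$ — which is again a hereditarily normal everywhere f-space (via $\varphi\cap L$) with $\dim L\le n-1$ — decomposes $L$ into $n$ zero-dimensional subspaces, and adjoining $X_0$ gives a decomposition of $X$ into $n+1$ zero-dimensional subspaces. The main obstacle, beyond the crux above, is that $G_\delta$-ness in $X$ does not survive passage through the $F_\sigma$-set $L$ (a $G_\delta$-subset of an $F_\sigma$-subset of $X$ need not be $G_\delta$ in $X$). I would therefore not demand $G_\delta$ pieces from the recursion directly; instead, having obtained some decomposition $X=\bigcup_{i=0}^n A_i$ with $\dim A_i\le 0$, I would enlarge each $A_i$ to a zero-dimensional $G_\delta$-set $X_i\supseteq A_i$ by the enlargement theorem for zero-dimensional subspaces, so that $X=\bigcup_{i=0}^n X_i$ with each $X_i$ a $G_\delta$-set of dimension $\le 0$. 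The delicate points to verify — and the place where the structural hypotheses do the real work — are precisely that the $\sigma$-discreteness of $\{OF_\alpha\}$ makes the sum theorem applicable to $L$, and that hereditary normality together with the everywhere-f-structure (equivalently, the hereditary equality $\Ind=\dim$) justifies both the enlargement theorem invoked here and the addition theorem used for (b)$\Rightarrow$(a).
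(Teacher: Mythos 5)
The paper itself contains no proof of Theorem~\ref{t1.6} to compare against: it is imported from \cite{5} and \cite[Theorem~3]{15}. Judged on its own merits, most of your proposal is sound, and its core is surely the same mechanism as in the cited sources: the hypotheses are hereditary (every subspace $M$, equipped with $\varphi\cap M$, is again a hereditarily normal everywhere f-space, so $\Ind=\dim$ on all subspaces by Theorem~\ref{t1.5}); the set $X_0=X\setminus\bigcup_\alpha\Fr U_\alpha$ is $G_\delta$ because $\Fr U_\alpha\subseteq OF_\alpha$ keeps the frontiers $\sigma$-discrete; the crux — that the everywhere f-system forces $\Ind X_0\le 0$, since $U_{\alpha_0}\cap X_0$ is an admissible neighborhood of $X_0\cap F_{\alpha_0}$ with empty frontier in $X_0$ — is correct; and $\dim L\le n-1$ follows from the discrete and countable closed sum theorems for $\dim$, which need only normality. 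The direction (b)$\Rightarrow$(a) is also fine, though your stated reason is not the right one: the addition theorem $\Ind(A\cup B)\le\Ind A+\Ind B+1$ needs neither the $G_\delta$ hypothesis nor the hereditary coincidence $\Ind=\dim$; it holds for all hereditarily normal spaces via the separation theorem for $\Ind$, which uses only hereditary normality.

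The genuine gap is the final step, where you ``enlarge each $A_i$ to a zero-dimensional $G_\delta$-set $X_i\supseteq A_i$ by the enlargement theorem for zero-dimensional subspaces.'' No such theorem is available in this generality: enlargement theorems of Tumarkin type are theorems about metrizable spaces, and nothing quoted in this paper (nor any standard result for hereditarily normal spaces) supplies one here. You correctly sense that the f-structure must justify it, but that justification is a real argument, roughly of the same weight as your crux, and it is exactly what is missing between ``$X$ decomposes into $n+1$ zero-dimensional sets'' and condition (b) as stated. It can be filled with your own technique: given $A\subseteq X$ with $\Ind A\le 0$, fix shrinkings $F_\alpha\subseteq P_\alpha\subseteq\overline{P_\alpha}\subseteq P'_\alpha\subseteq\overline{P'_\alpha}\subseteq OF_\alpha$; use $\Ind A\le 0$ to find $C_\alpha$ clopen in $A$ with $A\cap\overline{P_\alpha}\subseteq C_\alpha\subseteq A\cap P'_\alpha$; verify that $P_\alpha\cup C_\alpha$ and $(A\setminus C_\alpha)\cup(X\setminus OF_\alpha)$ are separated sets, so hereditary normality yields an open $U_\alpha$ with $F_\alpha\subseteq U_\alpha\subseteq OF_\alpha$ and $\Fr U_\alpha\cap A=\varnothing$; then $A^{*}=X\setminus\bigcup_\alpha\Fr U_\alpha$ is a $G_\delta$ set containing $A$, and it is zero-dimensional by your frontier-removal argument --- note that for this last step one needs $F_\alpha\subseteq U_\alpha$ (not merely $A\cap F_\alpha\subseteq U_\alpha$), since the index $\alpha_0$ produced by the everywhere f-system for the subset $A^{*}$ is not under your control. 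Without this lemma, or some equivalent device for keeping the pieces extracted from the $F_\sigma$ set $L$ $G_\delta$ in $X$ itself, the proof does not reach condition (b).
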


\begin{theorem}[{\cite{5}, \cite[Theorem~4]{15}}]
\label{t1.7}
For a  paracompact $\sigma$-space $X$ which is an everywhere f-space, the following conditions are equivalent:
\begin{enumerate}
\item[\rm(a)]
$\dim X\leq  n$\textup;
\item[\rm(b)]
$X$ is the image of a strongly zero-dimensional paracompact $\sigma$-space under a perfect ${\le}(n+1)$-to-$1$ map.
\end{enumerate}
\end{theorem}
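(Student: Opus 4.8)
The plan is to establish the two implications separately, treating (b)$\Rightarrow$(a) as routine and concentrating on (a)$\Rightarrow$(b). For (b)$\Rightarrow$(a) I would invoke the classical Hurewicz theorem on dimension-raising maps in the form valid for perfect maps of normal spaces: if $f\colon Y\to X$ is a closed continuous surjection all of whose fibres satisfy $|f^{-1}(x)|\le k+1$, then $\dim X\le \dim Y+k$. Applying this with $Y$ strongly zero-dimensional (so $\dim Y=0$), with the perfect map $f$ (which is in particular closed), and with $k=n$ (since $f$ is ${\le}(n+1)$-to-$1$), we obtain $\dim X\le n$ at once. The only things to verify are that the covering dimension $\dim$ is meant throughout, matching the hypothesis $\dim Y=0$, and that the perfect-map version of Hurewicz's inequality is available in our class of spaces, which is standard.

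For (a)$\Rightarrow$(b) I would begin from the decomposition theorem. Since $X$ is a paracompact $\sigma$-space which is an everywhere f-space, and paracompact $\sigma$-spaces are hereditarily normal, Theorem~\ref{t1.6} applies and lets me write $X=\bigcup_{i=0}^{n}X_i$ with each $X_i$ a $G_\delta$-subset of dimension $\dim X_i\le 0$. The naive candidate for the domain is the topological disjoint sum $Y_0=\bigoplus_{i=0}^{n}X_i$, with the obvious map $f_0\colon Y_0\to X$ sending the copy of $x$ in the $i$-th summand to $x$. This $f_0$ is automatically ${\le}(n+1)$-to-$1$ (a point of $X$ has one preimage for each piece containing it), its domain satisfies $\dim Y_0=0$ (a finite sum of zero-dimensional pieces), and $Y_0$ is a paracompact $\sigma$-space. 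The defect is that $f_0$ is \emph{not} closed, precisely because the pieces $X_i$ are only $G_\delta$ and not closed in $X$.

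The heart of the argument, and the step I expect to be the main obstacle, is therefore to repair this disjoint-sum construction into a genuinely perfect map without destroying zero-dimensionality or enlarging the fibres. I would use the $G_\delta$-representations $X_i=\bigcap_{m}G_{i,m}$, with $G_{i,m}$ open and decreasing, to attach the missing limit points: one realizes $Y$ as a closed subspace of the product of $X$ with a sequence of discrete index spaces recording, for each preimage, its address in the filtration $\{G_{i,m}\}$. The extra discrete coordinates compactify the fibres and force $f$ to be closed, while the discrete (hence zero-dimensional) factors keep $Y$ strongly zero-dimensional; being then a perfect preimage of the paracompact $\sigma$-space $X$, the space $Y$ is again a paracompact $\sigma$-space, this class being invariant under perfect preimages. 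The model to keep in mind is the split interval mapping $2$-to-$1$ onto $[0,1]$, a totally disconnected compactum obtained by splitting the points of a $1$-dimensional space along its two zero-dimensional $G_\delta$-pieces; this realizes exactly the required phenomenon for $n=1$.

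All of the real work lies in carrying out this attachment so that it simultaneously (i) keeps every fibre of size $\le n+1$, (ii) introduces no connectedness, so that $\dim Y=0$ is preserved, and (iii) produces a closed map. Here the $\sigma$-space and everywhere-f-space hypotheses are exactly what make the necessary $\sigma$-discrete, locally finite bookkeeping feasible: they supply covers refining the $G_{i,m}$ with the local finiteness needed to verify closedness of $f$ fibrewise and to assemble the discrete address coordinates coherently. I would expect the verification of closedness — translating "$x$ is a limit of points with prescribed addresses" into convergence of the corresponding threads in $Y$ — to be the most delicate point, and the place where the paracompact $\sigma$-space structure is used most heavily.
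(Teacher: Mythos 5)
First, a point of comparison that matters here: the paper contains no proof of Theorem~\ref{t1.7} at all --- it is quoted as a known result from \cite{5} and \cite[Theorem~4]{15} --- so your proposal can only be judged on its own terms. Your direction (b)$\Rightarrow$(a) is fine: both spaces in play are paracompact, and the Hurewicz dimension-raising theorem for closed surjections with fibres of cardinality $\le k+1$ is standard in that setting, giving $\dim X\le \dim Y+n=n$. Your reduction of (a)$\Rightarrow$(b) to Theorem~\ref{t1.6} (legitimate, since paracompact $\sigma$-spaces are hereditarily normal) and your observation that the disjoint-sum map $\bigoplus_i X_i\to X$ fails only by not being closed are also correct.

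The genuine gap is that everything after that is a declaration of intent rather than an argument: the space $Y$, the map, and the three verifications you list as (i)--(iii) are exactly the content of the theorem, and they are deferred, not done. Worse, the scaffolding you propose would not support them. (1) Recording ``addresses'' in the filtration $X_i=\bigcap_m G_{i,m}$ cannot by itself yield both closedness and the fibre bound: adjoining thread coordinates to force closedness typically enlarges a fibre to the whole (compact, often uncountable) space of branches of a finitely branching tree, which is precisely how the bound $\le n+1$ gets destroyed. The known Morita-style constructions avoid this by building, for each colour $i$ and stage $m$, a \emph{disjoint} family of open subsets of $X$ covering $X_i$, coherently refining the previous stage, with ``diameters tending to $0$'' in a suitable sense; manufacturing such families with a convergence property in a non-developable, non-metrizable paracompact $\sigma$-space is the hard part, and your sketch never engages it (note that you use the everywhere-f-space hypothesis only through Theorem~\ref{t1.6}). (2) ``Introduces no connectedness, so $\dim Y=0$ is preserved'' conflates inequivalent notions: total disconnectedness, and even $\ind=0$, do not imply $\dim=0$ (Roy's space is a metric space with $\ind=0$ and $\dim=1$); you must actually exhibit something like a $\sigma$-discrete clopen base of $Y$, or verify $\dim Y=0$ directly by covers. (3) The claim that the class of paracompact $\sigma$-spaces is ``invariant under perfect preimages'' is false: every compact Hausdorff space is a perfect preimage of a point, and a compact space need not be a $\sigma$-space (a compact $\sigma$-space is metrizable). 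The correct route for your $Y$ would be that it is a closed subspace of $X\times\prod_m D_m$ with each $D_m$ discrete, using heredity and countable productivity of paracompact $\sigma$-spaces --- but that presupposes the concrete construction you have not carried out.
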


\begin{definition}
\label{d1.8}
We say that a closed network $\gamma=\{F_{\alpha}:\alpha \in A\}$ in a space $Y$ is an \emph{S-network} if,
for any closed set $F\subseteq  Y$ and any open neighborhood $OF$ of $F$, there exists a family $\gamma(F)\subset
\gamma$ such that the union $\bigcup \gamma(F)$ of all elements of  $\gamma(F)$ is closed
in  $Y$  and $F\subseteq \bigcup \gamma(F)\subseteq OF$.
\end{definition}

  We refer to a space having a $\sigma$-closure-preserving S-network as an \emph{S-space}.
The author \cite{14} and, independently, Oka \cite{9} showed that, in the class of stratifiable spaces,
being an S-space  is sufficient for  having equal dimensions $\dim$ and~$\Ind$.

Recall that a $T_1$ normal paracompact $\sigma$-space $X$ is Hausdorff,
hereditarily paracompact, hereditarily normal, and perfectly normal; moreover, $X$ is a
hereditarily $\sigma$-space.

In the class of perfectly normal spaces, the  monotonicity theorem for $\Ind$ holds~\cite{1}; namely,  if $X$ is a
perfectly normal space and $D$ is any subset of $X$, then $\Ind D\leq \Ind X$.

In what follows, we need Vedenisov's lemma, which we state below in a form convenient for our purposes.

 \begin{Vedenisov's lemma}[{\cite{1}, \cite[Lemma~2.3.16]{7}}]
Let $X$ be a perfectly normal space, and let $A$ and $B$ be disjoint closed subsets of $X$.
Suppose given a  countable open cover $\{G_i, i=1,2,\dots\}$ of $X$ such that, for each $i=1,2,\dots$,
$\Ind\Fr G_i\leq n$ and either
$A\cap \overline{G_i} =\varnothing$ or $\overline{G_i}\cap B=\varnothing$.
Then there exists a  partition $L$ between $A$ and $B$ in $X$ such that $L \subseteq \bigl(\bigcup \{\Fr G_i:
i=1,2,\dots\}\bigr)$
and, therefore, $\Ind L\leq n$ \textup(by virtue of \v Cech's countable sum theorem for $\Ind$ in a
perfectly normal space~\cite[Theorem~6, Chap.~7, Sec.~3]{1}\textup).
\end{Vedenisov's lemma}

  Let $X$ be a paracompact $\sigma$-space, and let $\eta =\{F_\alpha: \alpha\in A\}$ be a system of closed
subsets of $X$. Suppose that
$\Ind F_\alpha\leq n$ for each $\alpha \in A$. Then $\Ind\bigl(\bigcup\{F_\alpha: \alpha\in A\}\bigr)\leq n$,
provided that the system $\eta$ is either countable, locally finite, or
$\sigma$-locally finite in $X$~\cite{1}. Nagata proved a sum theorem
for $\Ind$ in the class of stratifiable spaces for closure-preserving systems $\eta$; namely,
he proved that if  a system of $\eta =\{F_\alpha: \alpha\in A\}$ of closed sets  in a stratifiable space $X$ is
closure-preserving and $\Ind F_\alpha\leq n$ for each $\alpha\in A$,
then $\Ind\bigl(\bigcup\{F_\alpha: \alpha\in A\}\bigr)\leq n$.

 The following statement is an immediate corollary of Vedenisov's lemma and the sum theorem for~$\Ind$.

 \begin{theorem}[{\cite[Lemma~1.8]{5}}]
\label{t1.9.}
Let $\gamma =\{F_\alpha:\alpha\in A\}$ be a closed $\sigma$-discrete network in
a paracompact $\sigma$-space $X$. Suppose that, for each $\alpha\in A$ and any open neighborhood $UF_\alpha$ of
$F_\alpha$, there exists an open neighborhood
$OF_\alpha$ of $F_\alpha$ such that $F_\alpha \subseteq OF_\alpha\subseteq
\overline{OF_\alpha}\subseteq UF_\alpha$
and $\Ind\Fr OF_\alpha\leq n-1$. Then $\Ind X\leq n$.
\end{theorem}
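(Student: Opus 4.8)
The plan is to verify the defining inequality for $\Ind$ directly: fix a pair of disjoint closed sets $A,B\subseteq X$ and produce a partition $L$ between them with $\Ind L\le n-1$. Since $X$ is a paracompact $\sigma$-space it is perfectly normal, so the real work is to manufacture, out of the network $\gamma$ and the thickening hypothesis, a \emph{countable} open cover to which Vedenisov's lemma can be applied with parameter $n-1$.

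First I would build a $\sigma$-discrete open cover adapted to $A$ and $B$. For each $x\in X$ we have $x\notin A$ or $x\notin B$, so choose an open neighbourhood $O_x$ whose closure misses $A$ (when $x\notin A$) or misses $B$ (when $x\in A$, hence $x\notin B$); using that $\gamma$ is a network, pick $F_{\alpha(x)}\in\gamma$ with $x\in F_{\alpha(x)}\subseteq O_x$, and write $O_\alpha$ for the recorded neighbourhood of the chosen $F_\alpha$. The selected indices $A'=\{\alpha(x):x\in X\}$ give a subfamily of $\gamma$ that still covers $X$ and is $\sigma$-discrete, say $A'=\bigcup_i A'_i$ with each $\{F_\alpha:\alpha\in A'_i\}$ discrete. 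Because $X$ is paracompact Hausdorff, hence collectionwise normal, I can separate each discrete closed family $\{F_\alpha:\alpha\in A'_i\}$ by a discrete family of open sets $\{W_\alpha:\alpha\in A'_i\}$ with $F_\alpha\subseteq W_\alpha$. Now invoke the hypothesis of the theorem for each $\alpha$ with the neighbourhood $W_\alpha\cap O_\alpha$ of $F_\alpha$: this yields an open $G_\alpha$ with $F_\alpha\subseteq G_\alpha\subseteq\overline{G_\alpha}\subseteq W_\alpha\cap O_\alpha$ and $\Ind\Fr G_\alpha\le n-1$. By construction $\{G_\alpha:\alpha\in A'_i\}$ is discrete, the whole family covers $X$, and each $\overline{G_\alpha}$ is disjoint from $A$ or from $B$.

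The decisive step is the reduction from this $\sigma$-discrete cover to a countable one. Within each layer $i$, split $A'_i$ into those indices with $\overline{G_\alpha}\cap A=\varnothing$ and those with $\overline{G_\alpha}\cap B=\varnothing$, and let $H_i^A$, $H_i^B$ be the unions of the corresponding $G_\alpha$. Discreteness of the layer guarantees $\overline{H_i^A}=\bigcup\overline{G_\alpha}$ and $\Fr H_i^A\subseteq\bigcup\Fr G_\alpha$ (and likewise for $H_i^B$), so $\overline{H_i^A}\cap A=\varnothing$ and $\overline{H_i^B}\cap B=\varnothing$. The sum theorem for $\Ind$ over locally finite families, valid in the paracompact $\sigma$-space $X$, gives $\Ind\bigl(\bigcup\Fr G_\alpha\bigr)\le n-1$, and by monotonicity of $\Ind$ in the perfectly normal space $X$ we get $\Ind\Fr H_i^A\le n-1$ and $\Ind\Fr H_i^B\le n-1$. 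Thus $\{H_i^A,H_i^B:i=1,2,\dots\}$ is a countable open cover of $X$ in which every element has frontier of dimension $\le n-1$ and closure disjoint from $A$ or from $B$; Vedenisov's lemma (with parameter $n-1$) then yields a partition $L$ between $A$ and $B$ inside the union of these frontiers, so $\Ind L\le n-1$. Since $A,B$ were arbitrary, $\Ind X\le n$.

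The only genuinely delicate point is the layering reduction. One must keep the thickened neighbourhoods $G_\alpha$ discrete within each layer, so that the frontier of their union is controlled by the union of the individual frontiers; this is precisely why the separation by the discrete open family $\{W_\alpha\}$, together with the freedom granted by the hypothesis to place $\overline{OF_\alpha}$ inside a prescribed neighbourhood, are both needed. Everything else is a routine bookkeeping of the network, the sum theorem, and Vedenisov's lemma.
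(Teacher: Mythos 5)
Your proposal is correct and takes exactly the route the paper itself indicates: the paper presents Theorem~\ref{t1.9.} as ``an immediate corollary of Vedenisov's lemma and the sum theorem for $\Ind$'' (deferring details to \cite[Lemma~1.8]{5}), and your argument---selecting a $\sigma$-discrete subfamily of the network adapted to $A$ and $B$, thickening it inside discrete open expansions via the hypothesis, taking layerwise unions so the frontiers stay controlled by the locally finite sum theorem, and feeding the resulting countable cover into Vedenisov's lemma---is precisely that corollary written out in full. No gaps.
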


  Clearly, if a system $\{O_\alpha\}$ of open sets is $\sigma$-locally finite in a space $X$ and
$F_\alpha = X\setminus O_\alpha$, then the system $\{F_\alpha\}$
of closed sets is $\sigma$-closure-preserving~\cite{8}.

S.~Oka proved the following theorem.

\begin{theorem}[{\cite[Lemma~3.2]{9}}]
\label{t1.10}
  Let $X$ be a paracompact $\sigma$-space, and let  $\mathfrak {F}= \bigcup\{\mathfrak F_i: i=1,2,\dots\}$, where
each $\mathfrak F_i$ is a closure-preserving system of closed sets in $X$.
Then there exists a metric space $M$ and a one-to-one continuous map such that $f(F)$ is closed in $M$
for each $F\in \mathfrak{F}$ and $f(\mathfrak{F}_i)$ is closure-preserving in  $M$ for each
$i=1,2, \dots$\,.
\end{theorem}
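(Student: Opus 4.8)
My plan is to realize the desired map as a diagonal of countably many continuous maps into metric spaces, thereby reducing the theorem to a single-family statement. Recall that a continuous one-to-one map onto a metric space is the same thing as a coarser metrizable topology on $X$; equivalently, if $g_k\colon X\to N_k$ are continuous maps into metric spaces $(k=0,1,2,\dots)$, then the diagonal $f=\triangle_k g_k\colon X\to \prod_{k\ge 0}N_k$ is continuous, its target is metrizable (a countable product of metric spaces), and $f$ is one-to-one as soon as the family $\{g_k\}$ separates points. I will then put $M=f(X)$ with the subspace metric, so that $f$ is a continuous bijection onto $M$.

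The reduction rests on a per-family lemma: for a closure-preserving family $\mathcal A=\{A_\alpha\}_{\alpha\in\Lambda}$ of closed sets in $X$ there is a continuous map $g\colon X\to N$ into a metric space such that, for every $\alpha$, the image $g(A_\alpha)$ is closed, $g^{-1}(g(A_\alpha))=A_\alpha$, and the family $\{g(A_\alpha)\}$ is closure-preserving in $N$. Granting this, I apply it to each $\mathfrak F_i$ to obtain $g_i\colon X\to N_i$, and I also apply it to each layer $\mathcal N_n$ of a $\sigma$-discrete network $\mathcal N=\bigcup_n\mathcal N_n$ of the $\sigma$-space $X$ (a discrete family is closure-preserving), obtaining further maps; listing all of these as $g_0,g_1,g_2,\dots$ I form $f=\triangle_k g_k$. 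Injectivity of $f$ is forced by the network layers: if $x\ne y$ then, $X$ being $T_1$, the network furnishes some $N\in\mathcal N_n$ with $x\in N$ and $y\notin N$, and by the full-preimage clause the corresponding coordinate map sends $x$ into $g(N)$ and $y$ outside $g(N)$, so it separates $x$ from $y$. For closedness and closure-preservation of images I use two elementary remarks. First, if $h\colon Z\to W$ is continuous and $\{C_\alpha\}$ is a closure-preserving family of closed sets in $W$, then $\{h^{-1}(C_\alpha)\}$ is closure-preserving in $Z$, since $\bigcup_\beta h^{-1}(C_\beta)=h^{-1}\bigl(\bigcup_\beta C_\beta\bigr)$ and $\bigcup_\beta C_\beta$ is closed. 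Second, intersecting all members of a closure-preserving closed family with one fixed set keeps it closure-preserving in that set. Writing $\pi_i$ for the projection onto $N_i$, the full-preimage clause gives $f(F)=f(X)\cap\pi_i^{-1}(g_i(F))$ for every $F\in\mathfrak F_i$, so $f(F)$ is closed in $M$; and applying the two remarks to $\pi_i$ and to the fixed set $f(X)$ shows that $f(\mathfrak F_i)$ is closure-preserving in $M$.

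It remains to construct $g$ in the per-family lemma, and here the decisive structural input is the following consequence of closure-preservation. For $x\in X$ put $M(x)=\{\alpha:x\in A_\alpha\}$; then $\bigcup\{A_\alpha:x\notin A_\alpha\}$ is closed (a subfamily union of a closure-preserving family), so its complement $V_x$ is an open neighborhood of $x$ on which membership can only shrink, i.e.\ $M(y)\subseteq M(x)$ for all $y\in V_x$. This plays the role of local finiteness: as one moves slightly, no new sets $A_\alpha$ are entered. Using perfect normality of the paracompact $\sigma$-space $X$, I write each closed set as a zero set, $A_\alpha=\varphi_\alpha^{-1}(0)$ with $\varphi_\alpha\colon X\to[0,1]$ continuous, and then assemble the $\varphi_\alpha$ into a single continuous map into a metric space built from hedgehog factors over a locally finite decomposition of the index set supplied by paracompactness. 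The local shrinking property guarantees continuity of this assembled map despite $\Lambda$ being uncountable; the descriptions $A_\alpha=\varphi_\alpha^{-1}(0)$ give the full-preimage clause and closedness of each $g(A_\alpha)$; and because the construction is local, an arbitrary subfamily union maps to a closed set, so $\{g(A_\alpha)\}$ is closure-preserving.

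The main obstacle is exactly this per-family construction for an \emph{uncountable} closure-preserving family: one must land in a genuine metric space (not merely in a product such as $[0,1]^{\Lambda}$, which fails to be metrizable) while keeping the assembling map continuous and, simultaneously, keeping every subfamily union closed. This is where closure-preservation is indispensable---point-finiteness alone would not yield the neighborhoods $V_x$---and where paracompactness enters, through locally finite refinements and subordinate partitions of unity, to convert the local membership-shrinking into honest continuity into a metric target. Once the lemma is in place, the diagonal argument above assembles the single map $f$ and the metric space $M$ demanded by the theorem.
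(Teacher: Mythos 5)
A preliminary remark: the paper does not prove Theorem~\ref{t1.10} at all; it is imported as a black box from Oka \cite{9} (his Lemma~3.2). So there is no proof in the paper to compare yours against, and your proposal has to stand on its own as a complete argument. Its outer shell does stand: the two elementary remarks (preimages of closure-preserving closed families under continuous maps are closure-preserving; traces on a fixed subset stay closure-preserving), the injectivity of the diagonal forced by the layers of a $\sigma$-discrete network in a $T_1$ space, and the derivation of closedness of $f(F)$ in $M=f(X)$ and of closure-preservation of $f(\mathfrak F_i)$ from the identity $f(F)=f(X)\cap\pi_i^{-1}(g_i(F))$ are all correct. This validly reduces the theorem to your per-family lemma.

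But that lemma is where all of Oka's Lemma~3.2 actually lives --- with the full-preimage clause it is essentially the single-family case of the theorem itself --- and your treatment of it is a plan, not a proof. Two of its ingredients are unsupported. First, ``a locally finite decomposition of the index set supplied by paracompactness'' does not exist in general: paracompactness refines open covers, whereas a closure-preserving closed family need not be locally finite or admit any locally finite re-indexing. Your neighborhoods $V_x=X\setminus\bigcup\{A_\alpha:x\notin A_\alpha\}$ do not produce one either: closure-preservation is perfectly compatible with uncountably many members all passing through a single point $p$ (the spines of a hedgehog are the model case), and then $V_p=X$, every neighborhood of $p$ meets every $A_\alpha$, and no locally finite organization of the indices --- and no well-defined ``spine assignment'' for points lying in many $A_\alpha$ simultaneously --- is possible. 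Second, the conclusions you need about the assembled map $g$, namely that each $g(A_\alpha)$ is closed with $g^{-1}(g(A_\alpha))=A_\alpha$ and that every subfamily union of images is closed, are asserted (``because the construction is local'') rather than argued. Images of closed sets under non-injective continuous maps are not closed in general; the zero-set representation $A_\alpha=\varphi_\alpha^{-1}(0)$ yields the first conclusion only if each $\varphi_\alpha$ factors continuously through $g$, say $\varphi_\alpha=h_\alpha\circ g$, which your sketch never arranges; and even granting such a factorization, the closedness in $g(X)$ of the trace of $\bigcup_{\beta\in S}h_\beta^{-1}(0)$ for an arbitrary subfamily $S$ is exactly the hard point, since such a union of zero sets is typically not closed in the metric target. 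So what you have is a correct handling of the countable bookkeeping, with the genuine content of the theorem --- the construction, for a single uncountable, overlapping, non-locally-finite closure-preserving family, of a continuous map to a metric space preserving it --- left unproved; that construction is the substance of Oka's lemma and cannot be waved in by paracompactness and hedgehogs alone.
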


 Theorem~\ref{t1.10} implies the following statement.

\begin{statement}[{\cite[Corollary~1.10]{5}}]
\label{s1.11}
Let $X$ be a paracompact $\sigma$-space, and let
$\{O_\alpha: \alpha\in A\}$ be a locally finite system of open sets in $X$. Then
there exists a metric space $M_1$ and
a continuous one-to-one map $\psi\colon X\to M_1$ such that $\psi(O_\alpha)$
is open in $M_1$ for each  $\alpha \in A$. Note that if $\{O_\alpha: \alpha\in A\}$ covers the space $X$,
then $\{\psi(O_\alpha): \alpha\in A\}$ is an open cover of the metric space~$M_1$.
\end{statement}

 \begin{remark}
\label{r1.12}
Let $X$ be a paracompact $\sigma$-space, and let
$\{\lambda_i:  i=1,2,\dots\}$ be a sequence of open
locally finite covers of $X$. Then there exists a sequence
$\{\lambda'_i:  i=1,2,\dots\}$ of open
locally finite covers of $X$ such that $\lambda'_i\succ\lambda i$ and $\lambda'_{i+1}\succ\lambda'_i$ for all
$i=1,2,\dots$\,.

 Indeed, given two covers $\mu$ and $\nu$, let $\mu\wedge \nu$ denote the new cover consisting of pairwise
intersections of elements of $\mu$ and $\nu$. We set $\lambda'_1=\lambda_1$ and define, by induction,
$\lambda'_{i+1} = \lambda'_i\wedge \lambda_{i+1}$. Every new cover $\lambda'_i$ is a locally finite open
cover of $X$ being a refinement of $\lambda_i$.
\end{remark}

 Statement \ref{s1.11} implies the following assertion.

\begin{statement}
\label{s1.13}
Let $X$ be a paracompact $\sigma$-space, and let  $\{\lambda_i: i=1,2,\dots\}$ be a sequence of open
locally finite covers of $X$. In view of Remark~\ref{r1.12}, we can assume that
$\lambda_{i+1}\succ \lambda_i$. There exists a sequence $\{\omega^{\circ}_i: i=1,2,\dots\}$ of open locally
finite  covers of $X$ such that $\omega^{\circ}_i$ is a refinement of $\lambda_i$
\textup(that is, $\omega^{\circ}_i\succ\lambda_i$\textup) and  there exists a metric space $Z$
and a continuous one-to-one map $\psi\colon X\to Z$ for which $\psi(\omega^{\circ}_i)$ is an open locally
finite cover of $Z$. Moreover, the cover $\omega^{\circ}_{i+1}$ can be assumed
to be a refinement of
$\omega^{\circ}_i$  \textup(that is, $\omega^{\circ}_{i+1}\succ\omega^{\circ}_i$\textup).
\end{statement}

\begin{proof}
By virtue of Statement~\ref{s1.11}, for the open locally finite cover $\lambda_i$ of $X$,
there exists a metric space
$Z_i$ and a continuous one-to-one map $\psi_i\colon X\to Z_i$ such that
$\psi_i(\lambda_i)$ is an open cover of $Z_i$. Let $\vartheta_i$ be a locally finite open refinement
of $\psi_i$($\lambda_i$).  We set $\omega^{\circ}_i = \psi^{-1}_i (\vartheta_i)$. Consider the
 diagonal
$$
\psi\colon X\to Z\subseteq\prod\{Z_i:i=1,2,\dots \}
$$
of the maps $\psi_i$.
It follows from properties of diagonal maps that  $\psi(\omega^{\circ}_i)$ is an open locally
finite cover of $Z$. The map $\psi$ is the required continuous one-to-one map of $X$ onto the metric space
$Z$. The sequence $\{\omega^{\circ}_i: i=1,2,\dots\}$ of  open locally finite covers of $X$ is as required
(as shown above, we can assume that
$\omega^{\circ}_{i+1}\succ\omega^{\circ}_i$). This completes the proof of the statement.
\end{proof}

Arkhangel'skii proved the following theorem.

\begin{theorem}[{\cite[Sec.~6]{3}}]
\label{t1.14}
Let $X$ be a paracompact $\sigma$-space, and let $\gamma =\{F_{\alpha}: \alpha\in A\}$,
where $A=\bigcup_{i=1}^{\infty}A_{i}$, be a closed $\sigma$-discrete network  in $X$
such that the family $\gamma_i =\{F_{\alpha}: \alpha\in A_{i}\}$ is discrete in $X$ for
each $i=1,2, \dots$\,. Then there exists a continuous one-to-one map
$\varphi \colon X\to Y$ of $X$ onto a metric space $Y$ such that, for each $\alpha\in A$, the set
$\varphi(F_{\alpha})$ is closed in $Y$ and the system $\varphi(\gamma_i)$ is discrete in $Y$,
so that $\varphi(\gamma)$ is a closed $\sigma$-discrete network in the space~$Y$.
\end{theorem}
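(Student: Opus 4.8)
The plan is to condense $X$ onto a subspace of a countable product of metric hedgehogs, one hedgehog for each index $i$, arranging that the $i$-th coordinate map carries the discrete family $\gamma_i$ to a uniformly separated family. Since $X$ is a paracompact Hausdorff space it is collectionwise normal, so for each $i$ the discrete family $\gamma_i=\{F_\alpha:\alpha\in A_i\}$ can be swollen to a discrete family $\{O_\alpha:\alpha\in A_i\}$ of open sets with $F_\alpha\subseteq O_\alpha$. Moreover $X$ is perfectly normal (as recalled above for paracompact $\sigma$-spaces), so every closed set is a zero-set; hence for each $\alpha\in A_i$ I can choose a continuous function $g_\alpha\colon X\to[0,1]$ with $g_\alpha^{-1}(1)=F_\alpha$ and $g_\alpha\equiv 0$ off $O_\alpha$ (take the product of a Urysohn function for the pair $(F_\alpha,\,X\setminus O_\alpha)$ with a function equal to $1$ exactly on $F_\alpha$, the latter supplied by perfect normality).

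Next I would assemble these into a map. Let $J(A_i)$ be the metric hedgehog with spine set $A_i$: its points are the center $0$ together with pairs $(\alpha,t)$, $\alpha\in A_i$, $0<t\le 1$, under the metric $d((\alpha,t),(\beta,s))=|t-s|$ if $\alpha=\beta$ and $d((\alpha,t),(\beta,s))=t+s$ if $\alpha\ne\beta$. Define $h_i\colon X\to J(A_i)$ by $h_i(x)=(\alpha,g_\alpha(x))$ when $g_\alpha(x)>0$ and $h_i(x)=0$ otherwise; discreteness of $\{O_\alpha\}$ guarantees that at most one $g_\alpha$ is nonzero near any point, so $h_i$ is well defined and continuous. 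Let $\varphi=\Delta_i h_i\colon X\to Y$, with $Y\subseteq\prod_{i=1}^\infty J(A_i)$ the image, be the diagonal map. A countable product of metric spaces is metrizable, so $Y$ is a metric space. The map $\varphi$ is injective: given $x\ne x'$, the network $\gamma$ together with the $T_1$ axiom yields $F_\alpha\in\gamma$ with $x\in F_\alpha$ and $x'\notin F_\alpha$; if $\alpha\in A_i$ then $h_i(x)=(\alpha,1)$ while $h_i(x')\ne(\alpha,1)$, so $\varphi(x)\ne\varphi(x')$.

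It then remains to verify the three conclusions, and this is where the geometry of the hedgehog does the work. For closedness, note that $h_i^{-1}((\alpha,1))=g_\alpha^{-1}(1)=F_\alpha$, whence $\varphi(F_\alpha)=Y\cap\pi_i^{-1}((\alpha,1))$ (with $\pi_i$ the $i$-th projection), a closed subset of $Y$. For $\sigma$-discreteness, fix $i$; the apex points $\{(\alpha,1):\alpha\in A_i\}$ are pairwise at distance $2$ in $J(A_i)$, so for any $y\in Y$ the open ball of radius $\tfrac12$ about $\pi_i(y)$ contains at most one of them, and its $\pi_i$-preimage is a neighborhood of $y$ in $Y$ meeting at most one set $\varphi(F_\alpha)$, $\alpha\in A_i$; thus $\varphi(\gamma_i)$ is discrete in $Y$, and the union $\bigcup_{\alpha\in A_i}\varphi(F_\alpha)$ is closed. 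Finally $\varphi(\gamma)$ is a network: since $\varphi$ is a continuous bijection, for $y=\varphi(x)$ and open $V\ni y$ one applies the network property of $\gamma$ inside the open set $\varphi^{-1}(V)$ to produce $F_\alpha$ with $y\in\varphi(F_\alpha)\subseteq V$. The main obstacle is precisely the transfer of \emph{discreteness} through a condensation onto a weaker (metric) topology: continuity alone preserves only closure-type properties (which is all that Theorem~\ref{t1.10} supplies), and it is the uniform $2$-separation of the apexes in the hedgehog that upgrades this to genuine discreteness; securing the discrete open swelling $\{O_\alpha\}$ in the first place is where collectionwise normality of $X$ is indispensable.
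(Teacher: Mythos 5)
The paper does not actually contain a proof of Theorem~\ref{t1.14}: it is quoted from Arkhangel'skii's paper \cite{3} as a known result, so there is no internal argument to compare yours against. Judged on its own, your proof is correct, and it is essentially the classical condensation argument that the citation points to: separate each discrete family $\gamma_i$ by a \emph{discrete} (not merely pairwise disjoint) open swelling using collectionwise normality of the paracompact Hausdorff space $X$; use perfect normality of a paracompact $\sigma$-space (recalled in Section~1 of the paper) to get functions $g_\alpha$ with the exact-preimage property $g_\alpha^{-1}(1)=F_\alpha$ vanishing off $O_\alpha$; assemble them into hedgehog-valued maps $h_i$; and take the countable diagonal. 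The two places where such proofs usually go wrong are handled correctly by you: the exactness $g_\alpha^{-1}(1)=F_\alpha$ is what makes $\varphi(F_\alpha)=Y\cap\pi_i^{-1}\bigl((\alpha,1)\bigr)$ a full preimage slice, which simultaneously gives closedness of $\varphi(F_\alpha)$ and, via the mutual distance $2$ of the apexes, discreteness of $\varphi(\gamma_i)$ in $Y$ (an ordinary Urysohn function with $g_\alpha=1$ on $F_\alpha$ but possibly elsewhere would break the reverse inclusion); and the upgrade from disjoint to discrete swelling is what makes each $h_i$ continuous at points outside $\bigcup_{\alpha\in A_i}O_\alpha$. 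Your closing observation is also apt: Oka's Theorem~\ref{t1.10} only transfers closedness and closure-preservation through a condensation, and no purely formal argument recovers discreteness from that, so the uniform separation built into the hedgehog (or some equivalent metric device) is genuinely needed here.
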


 In \cite{10} Reed introduced the  notion of a developable subspace $M$
of a first countable space $X$, which is important for what follows.

\begin{definition}[\cite{10}]
\label{d1.15}
A subspace  $M$ of a first countable space $X$ is said to be \emph{developable} in $X$ if there
exists a sequence $\{G_i: i=1,2,3,\dots \}$ of open covers of $X$ such that, for each point $m\in M$
and each open neighborhood $Om$ of $m$, there exists an $n\in \mathbb N$ for which $\st(m, G_n)\subseteq Om$, and for each
point $x\in X$, there exists a nonincreasing sequence $\{U_k x : k=1,2,3,\dots \}$ of its open neighborhoods
such that each $U_kx$ is an element of $G_k$,
i.e., $U_kx\in G_k$; moreover, this sequence forms a local base of $X$ at $x\in X$.
\end{definition}

 \begin{theorem}[{\cite[Theorem~2.5]{5}}]
\label{t1.16}
Let $X$ be a first countable paracompact $\sigma$-space. Suppose that $X$
has a closed $\sigma$-discrete network  $\gamma =\{F_{\alpha}: \alpha\in A\}$ such that
$A=\bigcup_{i=1}^{\infty}A_{i}$ and the system $\gamma_i=\{F_{\alpha}:\alpha\in A_{i}\}$ is discrete in $X$ for
each $i=1,2, \dots$\,. Then $X$ contains a metrizable subspace $M$ such that $M$ is dense in $X$,
$M \cap F_{\alpha}$  is dense in $F_{\alpha}$ for each $\alpha\in A$, and $M$
is  developable in $X$.
\end{theorem}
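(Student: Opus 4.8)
The plan is to transport a development from a metric space back to $X$ along the continuous bijection supplied by Theorem~\ref{t1.14}, and to let $M$ be the set of points at which this pulled-back structure actually detects the topology of $X$. First I would apply Theorem~\ref{t1.14} to the network $\gamma$ to obtain a continuous one-to-one map $\varphi\colon X\to Y$ onto a metric space $(Y,d)$ for which every $\varphi(F_\alpha)$ is closed and each $\varphi(\gamma_i)$ is discrete in $Y$. Since $\varphi$ is a continuous bijection, the formula $d_\varphi(x,x')=d(\varphi(x),\varphi(x'))$ defines a metric on $X$ whose topology is coarser than the given one, and I would then set
\[
M=\{x\in X:\{\varphi^{-1}(O_{1/k}\varphi(x)):k=1,2,\dots\}\text{ is a neighborhood base at }x\text{ in }X\},
\]
i.e.\ the set of points at which $\varphi^{-1}$ is continuous (equivalently, at which $\varphi$ carries neighborhoods to neighborhoods). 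At each such point the subspace topology induced from $X$ coincides with the metric topology of $d_\varphi$, so $M$ with its subspace topology is metrized by $d_\varphi|_M$; this disposes of metrizability at once.

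For developability I would build the covers required by Definition~\ref{d1.15} by combining first countability of $X$ with the metric on $Y$. Fixing for every $x\in X$ a decreasing countable neighborhood base $\{V_k(x):k=1,2,\dots\}$, I set
\[
G_k=\{V_k(x)\cap\varphi^{-1}(O_{1/k}\varphi(x)):x\in X\},\qquad k=1,2,\dots,
\]
each an open cover of $X$. Condition~(b) is immediate: for any $x$ the element $U_kx=V_k(x)\cap\varphi^{-1}(O_{1/k}\varphi(x))\in G_k$ is a neighborhood of $x$ contained in $V_k(x)$, and these form a decreasing local base at $x$. For condition~(a) the key computation is that, for every $m$, any element of $G_n$ meeting $m$ is contained in $\varphi^{-1}(O_{2/n}\varphi(m))$ (if $d(\varphi(m),\varphi(x))<1/n$ then $O_{1/n}\varphi(x)\subseteq O_{2/n}\varphi(m)$), whence $\st(m,G_n)\subseteq\varphi^{-1}(O_{2/n}\varphi(m))$. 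Thus if $m\in M$ and $Om$ is a neighborhood of $m$, I choose $k$ with $\varphi^{-1}(O_{1/k}\varphi(m))\subseteq Om$ and take $n=2k$ to get $\st(m,G_n)\subseteq Om$. Hence $\{G_k\}$ witnesses that $M$ is developable in $X$.

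The substantial point, and the one I expect to be the main obstacle, is to show that $M$ is dense in $X$ and that $M\cap F_\alpha$ is dense in $F_\alpha$ for every $\alpha$. Equivalently, the ``bad'' set $X\setminus M$ --- the points $x$ admitting a sequence $x_j$ with $\varphi(x_j)\to\varphi(x)$ in $Y$ but $x_j\not\to x$ in $X$ --- must be shown to have empty interior in $X$ and, more delicately, empty interior in each $F_\alpha$. (A Baire-category argument on $Y$ is not available, since $X$ need not be a Baire space, so a combinatorial argument driven by the network is needed.) Here I would exploit exactly the output of Theorem~\ref{t1.14} not yet used, namely the discreteness of each $\varphi(\gamma_i)$ in $Y$. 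Given a bad point $x$ and a witnessing sequence $x_j$ staying outside a fixed neighborhood of $x$, I would use that $\gamma$ is a network to capture each $x_j$ in a network element $F_{\beta_j}$ also avoiding a neighborhood of $x$; since $\varphi(x_j)\to\varphi(x)$, the point $\varphi(x)$ lies in the closure of $\bigcup_j\varphi(F_{\beta_j})$. Splitting the $\beta_j$ by the level $A_i$ they belong to, infinitely many in a single level with distinct indices would contradict discreteness of $\varphi(\gamma_i)$ at $\varphi(x)$, while a constant index $\beta$ forces $x\in F_\beta$ (as $\varphi(F_\beta)$ is closed) and reproduces the same situation one level down, inside $F_\beta$.

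This dichotomy is the engine of an induction on the network levels that confines the bad set and yields its nowhere-density; the density of $M\cap F_\alpha$ in $F_\alpha$ is obtained by running the same analysis inside the closed subspace $F_\alpha$, whose induced network $\gamma\cap F_\alpha$ is again $\sigma$-discrete and whose image $\varphi(F_\alpha)$ is a closed metric subspace on which the families $\varphi(\gamma_i\cap F_\alpha)$ remain discrete. Making this induction precise --- in particular controlling how badness at a point of $F_\alpha$ \emph{with respect to $X$} (rather than merely with respect to $F_\alpha$) propagates through the levels --- is the delicate part of the argument, and is where first countability and the $\sigma$-discreteness of the network enter in an essential way.
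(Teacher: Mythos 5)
Your definitions of $M$ and of the covers $G_k$ do deliver the two easy conclusions: at a point where the sets $\varphi^{-1}(O_{1/k}\varphi(x))$ form a base, the subspace topology coincides with the $d_\varphi$-topology, and your star computation correctly yields developability of the continuity set. But the theorem's real content is exactly what you defer: density of $M$ in $X$ and, above all, density of $M\cap F_\alpha$ in each $F_\alpha$, and here the proposal has a genuine hole. Your dichotomy for a bad point $x$ with witnesses $x_j\in F_{\beta_j}$ covers only two cases --- infinitely many distinct $\beta_j$ in one level $A_i$, or a constant $\beta_j$ --- and omits the third, unavoidable one: the $\beta_j$ may lie in infinitely many \emph{different} levels, finitely many per level. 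In that case no single discrete family $\varphi(\gamma_i)$ is violated and no single closed set $\varphi(F_\beta)$ captures $\varphi(x)$, and no contradiction is available; indeed none should be, since bad points must exist whenever $X$ is non-metrizable.

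Moreover, the gap cannot be closed by any argument that, like yours, uses only the properties of $\varphi$ stated in Theorem~\ref{t1.14}. Take $X$ to be McAuley's bow-tie space (Euclidean neighborhoods off the $x$-axis, bow-tie neighborhoods at axis points), with the countable closed network $\gamma$ consisting of the axis intervals $[a,b]\times\{0\}$ ($a<b$ rational) together with closed Euclidean balls missing the axis, each level $\gamma_i$ a single element. This is a first countable cosmic, hence paracompact $\sigma$-, space, and the identity map $\varphi\colon X\to \mathbb{R}^2$ onto the Euclidean plane is a continuous bijection under which every network element has closed image and every (singleton) level is discrete --- all the conclusions of Theorem~\ref{t1.14}. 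Yet the continuity set of $\varphi^{-1}$ is precisely the complement of the axis (no Euclidean ball around an axis point fits inside a bow-tie), so your $M$ satisfies $M\cap F_\alpha=\varnothing$ for every axis-interval network element: the per-element density fails totally. So $M$ must be built by a different, selective construction, not taken to be the full set of continuity points; this is consistent with the paper's remark following Theorem~\ref{t1.16} that the $M$ constructed in \cite[Theorem~2.5]{5} satisfies $\dim M=0$ (in the bow-tie example your $M$ is the two-dimensional off-axis open set). Note also that the present paper gives no proof of this statement --- it is quoted from \cite{5} --- so the burden of the construction is entirely on the argument you sketch, and that argument, as it stands, does not go through.
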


  It follows from the construction of $M$ in \cite[Theorem~2.5]{5} that $\dim M=0$.

 \begin{remark}
\label{r1.17}
Since the subspace $M$ in Theorem~\ref{t1.16} is  developable in $X$, it follows that there
exists a sequence  $\{G_i: i=1,2,3,\dots \}$ of  open covers of $X$ such that, for every point $m\in M$
and every open
neighborhood $Om$ of $m$ in $X$, we have  $\st(m,G_n)\subseteq Om$ for some $n\in \mathbb N$.
Let $\lambda_i$ be an open locally finite refinement of $G_i$ for $i=1,2, \dots$\,. In view of Remark~\ref{r1.12},
we may assume that
$\lambda_{i+1}$ is a refinement of $\lambda_i$. Thus, the assumptions of Statement~\ref{s1.13} hold;
hence there exists  a sequence $\{\omega^{\circ}_i: i=1,2,\dots\}$ of open locally finite covers of $X$
such that each $\omega^{\circ}_i$ is a refinement of $\lambda_i$
($\omega^{\circ}_i\succ\lambda_i$), $\omega^{\circ}_{i+1}\succ\omega^{\circ}_i$, and  there exists
a metric space $Z$ and a continuous one-to-one map $\psi\colon X\to Z$ for which
$\psi(\omega^{\circ}_i)$ is an open locally finite cover of the space $\psi(X)=Z$.
Clearly, for every point $m\in M$ and every
open neighborhood $Om$ of $m$ in $X$, there exists an $n\in \mathbb N$ such that $\st(m,\omega^{\circ}_n)
\subseteq Om$. It follows that the restriction of $\psi\colon X\to Z$ to $M$ is a homeomorphic embedding, i.e.,
the subspace $M$ of $X$ is  homeomorphic to the subspace $\psi(M)$ of~$Z$.
\end{remark}

 \begin{remark}
\label{r1.18}
Thus, we have

(1)  a  continuous surjective one-to-one
map $\psi\colon X\to Z$ of a first countable paracompact $\sigma$-space $X$ onto a metric space $Z$
(see Statement~\ref{s1.13} and Remark~\ref{r1.17});

(2) a continuous surjective one-to-one map $\varphi\colon X\to Y$ constructed in Arkhangel'skii's theorem
(Theorem~\ref{t1.14}).

Consider the continuous one-to-one map $f=\psi\triangle\varphi$, i.e., the
diagonal of the maps $\psi$ and $\varphi$; this is a map $X\to M_0\subseteq Z\times Y$. Let
$\gamma=\{F_{\alpha}: \alpha\in A\}$ be a network in $X$ such that $A=\bigcup_{i=1}^{\infty}A_{i}$ and
the system $\gamma_i=\{F_{\alpha}: \alpha\in A_{i}\}$ is a discrete system of closed sets in $X$
for each $i=1,2, \dots$; thus, $\gamma$ is a closed $\sigma$-discrete network in $X$.
It follows from properties of diagonal maps that

(i)  $f(\gamma)$ is a  closed $\sigma$-discrete network in $M_0$;

 (ii) the subspace $M$ of $X$ is homeomorphic to the subspace $f(M)$ of the
metric  space~$M_0$;

 (iii) $f(\omega^{\circ}_i)$ is an open locally finite cover of~$M_0$;

 (iv) since $M_0$ is a metric space, each open cover $f(\omega^{\circ}_i)$ of $M_0$ has an open locally
finite refinement $\vartheta_i$ such that  $\diam G<1/i$ for every $G\in \vartheta_i$. We set
$\omega_i=f^{-1}(\vartheta_i)$. Thus,  if $G\in f(\omega_i)$, then $\diam G<1/i$.
We have obtained a sequence
$\{\omega_i: i=1,2,\dots\}$ of open locally finite covers of $X$ such that
$$
  \diam f(G)<1/i\text{ for } G\in \omega_i\quad \text{and}\quad \omega_{i+1}\succ\omega_i. \eqno{(1.18)}
$$
\end{remark}

  \begin{definition}
\label{d1.19}
We refer to the continuous one-to-one map
$f\colon X\to M_0\subseteq Z\times Y$ with properties (i)--(iv) in Remark~\ref{r1.18} as a \emph{weak special
condensation} of the  first countable  paracompact $\sigma$-space $X$ onto a metric space.
\end{definition}

  Let $X$ be a first countable paracompact $\sigma$-space. Then $X$ is semimetrizable by
Arkhangel'skii's theorem \cite{2}. Moreover, according to Theorem~\ref{t1.16}, $X$ has a metrizable subspace $M$
such that $M$ is dense in $X$ and
$M \cap F_{\alpha}$  is dense in $F_{\alpha}$. We say that a semimetric $d(x,y)$ on $X$
is an \emph{Arkhangel'skii semimetric} if it generates the topology of  $X$ and the restriction of $d(x,y)$
to $M$ is a metric. A method for constructing such semimetrics was proposed by Arkhangel'skii in
\cite[Theorem~2.8]{2} (this is why we call them ``Arkhangel'skii semimetrics'').
There may exist many Arkhangel'skii semimetrics on $X$; one of them is given in \cite{5}.
In what follows, we use a particular Arkhangel'skii semimetric constructed by
Arkhangel'skii's method of~\cite{2}.

 \begin{construction}
\label{con1.20}
\textbf{Construction of an Arkhangel'skii's semimetric}.
Let $X$ be a first countable paracompact $\sigma$-space. Then $X$ has a closed $\sigma$-discrete network
$\gamma = \{F_{\alpha}: \alpha\in A\}$, where $A=\bigcup_{i=1}^{\infty}A_{i}$,
and a dense metrizable subspace
$M\subseteq X$ such that $M\cap F_{\alpha}$ is dense in $F_{\alpha}$ for each $\alpha\in A$. According to
Arkhangel'skii's method in \cite[Theorem~2.8]{2}, to construct a semimetric on $X$, we must
determine an appropriate countable base of neighborhoods of each $x\in X$.

If $x\in M$, then for $O_nx$ we take $\st(x,\omega_n)$, where $\omega_n$ is as in Remark \ref{r1.18},\,(iv).

Suppose that $x\in X\setminus M$. Let $\omega_n$ be the  open locally finite  cover of $X$ in Remark~\ref{r1.18}.
Since $X$ is first countable, it follows that the point $x$ has a countable base of open
neighborhoods  $U_kx$
such that $U_{k+1}x\subseteq U_kx$. Let $Vx$ be the intersection of all those elements of $\omega_n$ which
contain $x$ (the number of such elements is finite), i.e., $Vx=\bigcap \{G: G\in \omega_n, x\in G\}$,
and let $k$ be the least positive integer such that $k>n$ and $U_kx\subseteq Vx$.  We set $O_nx=U_kx$.

Thus, we have constructed the countable local base $\{O_nx:x\in X\}$ of $X$ at each point $x\in X$. It is this
base which we use in the proofs of the main Theorems~\ref{t2.1} and~\ref{t2.3}.

In \cite[Theorem~2.8]{2} the following semimetric $d_1(x,y)$ on a  first countable paracompact $\sigma$-space
$X$ was defined. We have $O_{n+1}x\subseteq O_nx$, x$\in$X, $n\in \mathbb N$.
We set
$$
M^k_x=\bigcup\{F^i_{\alpha}: i\leq k, \alpha\in A_i, x\notin F^i_{\alpha}\},
\qquad \widetilde O_kx= O_kx\setminus M^k_x
$$
for $k=1,2,\dots$ and
$$
N(x,y)=\max\{n: \widetilde O_nx\cap \widetilde O_ny\cap (x\cup y)\neq \varnothing\}.
$$
The semimetric $d_1(x,y)$ is  defined by $d_1(x,y)=1/N(x,y)$.

The Arkhangel'skii semimetric $d(x,y)$ which we need is defined  as follows:

(1) if $x\in X\setminus M$ or $y\in X\setminus M$, then $d(x,y)=d_1(x,y)$;

(2) if $x\in M$ and $y\in M$, then $d(x,y)=\rho_0(f(x),f(y))$, where $f\colon X\to M_0\subseteq Z\times Y$
is the weak special condensation of $X$ onto $M_0$ (see Remark~\ref{r1.18}) and $\rho_0$ is the metric on
$M_0$.

Thus, both metrics on $M\subseteq X$ and on $f(M)\subseteq M_0$ are induced by the same metric
on $M_0$. Therefore, (1.18) implies the following property, which plays an important role
in the proofs of Theorems~\ref{t2.1} and~\ref{t2.3}:
$$
\diam(O_nx\cap M)<1/n\quad \text{for }x\in X\setminus M. \eqno{(1.20)}
$$
It is easy to check in the same way as in \cite[Theorem~2.8]{2} that $d(x,y)$ is a semimetric on $X$
and  the semimetrics $d(x,y)$ and $d_1(x,y)$ generate the topology of~$X$.
\end{construction}

The proofs of our  main theorems also use the notion of a semicanonical space, which was
essentially introduced by Dugundji in the paper \cite{12}, in which he also described the basic properties
of semicanonical spaces. The term ``semicanonical'' was suggested by Cauty.

\begin{definition}
\label{d1.21}
Let $A$ be a closed subset of a space $X$. The pair $(X,A)$ is said to be \emph{semicanonical} if there
exists an open cover $\omega$ of the open set $X\setminus A$ such that each open neighborhood $OA$ of $A$
contains an open neighborhood $UA$ of $A$ with the property that any $G\in \omega$ intersecting
$UA$ is contained in~$OA$.

A space $X$ such that  the pair $(X,A)$ is  semicanonical for every closed subset $A$ of $X$ is called a
\emph{semicanonical space}.

We refer to the cover $\omega$ as a \emph{semicanonical cover} for the pair $(X,A)$.
\end{definition}

\begin{remark}[a property of the semicanonical cover $\omega$]
\label{r1.22}
Let us denote $X\setminus OA$ by $B$. If an element $G$ of $\omega$ intersects $B$,
then $G\cap UA=\varnothing$, i.e., $\overline{G}\cap A=\varnothing$. It follows from the
arbitrariness of the neighborhood $OA$ of $A$ in Definition~\ref{d1.21} that $\overline{G}\cap A=\varnothing$
for each $G\in \omega$.  If $\omega$ is locally finite
in $X\setminus A$, then, by Definition~\ref{d1.21}, the system $\{G: G\cap B\neq\varnothing\}$ is
locally finite in the entire space $X$. Therefore, $A\cap \overline{\st(B, {\omega})}=\varnothing$.
\end{remark}

Remark~\ref{r1.22} has the following corollary.

\begin{corollary}
\label{c1.23}
Given a  paracompact $\sigma$-space $X$, let $A$ and $B$ be  two disjoint  closed subsets of $X$, and let
$\omega$ be a semicanonical cover for the pair $(X,A)$. Suppose that  the
boundary of each $G\in \omega$ has  dimension  $\Ind\Fr G\leq n-2$
and $\omega$ is locally finite in $X\setminus A$. Then there exists a partition $L$
of  dimension $\Ind L\leq n-2$ between $A$ and~$B$.
\end{corollary}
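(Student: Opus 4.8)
The plan is to exhibit the partition explicitly as the frontier of the star of $B$ and to control its dimension by the locally finite sum theorem for $\Ind$, so that Vedenisov's lemma is not actually needed here. First I would set
$$
W=\st(B,\omega)=\bigcup\{G\in\omega: G\cap B\neq\varnothing\},
$$
an open subset of $X$ (note $G$ is open in the open set $X\setminus A$, hence open in $X$). Since $A$ and $B$ are disjoint closed sets, $B\subseteq X\setminus A$, and $\omega$ covers $X\setminus A$; thus every point of $B$ lies in some $G\in\omega$ which then meets $B$, so $B\subseteq W$. By Remark~\ref{r1.22}, the subfamily $\omega_B=\{G\in\omega: G\cap B\neq\varnothing\}$ is locally finite in the whole space $X$, and moreover $A\cap\overline{W}=A\cap\overline{\st(B,\omega)}=\varnothing$.

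Next I would take $L=\Fr W=\overline{W}\setminus W$ and verify that it is a partition between $A$ and $B$. Indeed $X\setminus L=(X\setminus\overline{W})\cup W$ is a union of two disjoint open sets (disjoint because $W\subseteq\overline{W}$), with $A\subseteq X\setminus\overline{W}$ (as $A\cap\overline{W}=\varnothing$) and $B\subseteq W$. Hence $L$ separates $A$ from $B$ in the required sense, and $L$ is closed.

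It remains to estimate $\Ind L$. Because $\omega_B$ is locally finite in $X$, so is the family $\{\overline{G}: G\in\omega_B\}$, whence $\overline{W}=\bigcup\{\overline{G}: G\in\omega_B\}$; consequently every point of $\Fr W$ lies in some $\overline{G}$ but not in $W\supseteq G$, so it lies in $\overline G\setminus G=\Fr G$, giving $\Fr W\subseteq\bigcup\{\Fr G: G\in\omega_B\}$. The sets $\Fr G$ form a locally finite family of closed sets, each with $\Ind\Fr G\leq n-2$ by hypothesis, so the sum theorem for $\Ind$ for locally finite families in a paracompact $\sigma$-space yields $\Ind\bigl(\bigcup\{\Fr G: G\in\omega_B\}\bigr)\leq n-2$. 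Since a paracompact $\sigma$-space is perfectly normal and perfect normality is hereditary, the monotonicity theorem for $\Ind$ applied to the subset $\Fr W$ of $\bigcup\{\Fr G: G\in\omega_B\}$ gives $\Ind L=\Ind\Fr W\leq n-2$, as required.

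The one delicate point is that both the containment $\Fr W\subseteq\bigcup\{\Fr G: G\in\omega_B\}$ and the use of the locally finite sum theorem rely on local finiteness of $\omega_B$ in all of $X$, not merely in $X\setminus A$; this is exactly what Remark~\ref{r1.22} supplies from the hypothesis that $\omega$ be locally finite in $X\setminus A$, and it is the crux of the argument. Everything else is the routine bookkeeping of frontiers together with perfect normality of $X$.
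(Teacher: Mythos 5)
Your proof is correct and takes essentially the same route as the paper: the paper's (terse) argument likewise takes the partition to lie inside $\bigcup\{\Fr G: G\in\omega,\ G\cap B\neq\varnothing\}$, invokes Remark~\ref{r1.22} to get local finiteness of that subfamily in all of $X$ together with $A\cap\overline{\st(B,\omega)}=\varnothing$, and concludes via the locally finite sum theorem for $\Ind$ in paracompact $\sigma$-spaces. Your write-up simply makes explicit the choice $L=\Fr\bigl(\st(B,\omega)\bigr)$ and the routine verification that it is a partition, which the paper leaves implicit.
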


  This corollary is a direct consequence of Remark~\ref{r1.22}: it follows from the inclusion
$L\subseteq \bigcup\{\Fr G: G\in \omega, G\cap B\neq\varnothing\}$ and the sum theorem for the $\Ind$ dimension
of the elements of a locally finite system of closed
sets in a paracompact $\sigma$-space.

 Let $X$ be a paracompact $\sigma$-space, and let $\gamma=\{F_{\alpha}: \alpha\in A\}$ be some closed
$\sigma$-discrete network such that $A=\bigcup_{i=1}^{\infty}A_{i}$ and the system
$\gamma_i=\{F_{\alpha}:\alpha\in A_{i}\}$  is discrete in $X$ for each $i=1,2, \dots$\,. We set
$F_i=\bigcup\{F_{\alpha}: \alpha\in A_{i}\}$.

In this notation we can formulate Definition 1.24, Statement 1.25 and Statement 1.26.

\begin{definition}
\label{d1.24}
We say that the paracompact $\sigma$-space $X$ is \emph{almost semicanonical}
if each pair $(X, F_{i})$ is semicanonical.
\end{definition}

Note that Definition 1.24 requires the existence of at least one closed $\sigma$-discrete network $\gamma$=\{$F_{\alpha}\}$  with the property specified in the definition, and this property is not at all required from each network in X.

 \begin{statement}
\label{s1.25}
Suppose that $F_{\alpha}\in \gamma_i$ and the pair $(X, F_{i})$ is almost
semicanonical. Then so is every pair $(X, F_{\alpha})$, $\alpha\in A_{i}$.
\end{statement}

\begin{proof}
 We prove the statement by specifying a particular semicanonical cover $\omega_{\alpha}$  for every
pair $(X, F_{\alpha})$, $\alpha\in A_i$.  Let
$\omega_i$  be an open semicanonical cover of $X\setminus F_i$  for the pair $(X, F_i)$. Since
the system $\{F_{\alpha}: \alpha\in A_{i}\}$ is discrete in $X$, it follows that there exist
neighborhoods $OF_{\alpha}$ of its elements $F_\alpha$ such that the system $\{OF_{\alpha}: \alpha\in A_{i}\}$ is
discrete in $X$; moreover, there exists a system of neighborhoods
$\{VF_{\alpha}: \alpha\in A_{i}\}$ of $F_\alpha$ such that $F_{\alpha} \subseteq VF_{\alpha} \subseteq
\overline{VF_{\alpha}}\subseteq OF_{\alpha}$ for each $\alpha\in A_i$.
For a fixed $\alpha\in A_i$, consider the system
$$
\omega^1_i=\{G^1:
G^1=G\setminus (\bigcup\{\overline{VF_{\beta}}:  \beta\in A_{i}, \beta\neq\alpha, G\in \omega_i\})\}
$$
of open sets.
The required open semicanonical cover of $X\setminus F_{\alpha}$ for the pair $(X, F_{\alpha})$ is
$$
\omega_{\alpha}=\omega^1_i \cup \{OF_{\beta}: \beta\in A_{i}, \beta\neq\alpha\}.
$$

Indeed, take any closed subset $\Phi$ of  $X$ such that $\Phi\cap F_{\alpha}=\varnothing$ and let $\Phi_1 =
\Phi\setminus \bigcup\{VF_{\beta}: \beta\in A_{i}, \beta\neq\alpha\}$. By
construction, $\Phi_1\cap F_i=\varnothing$. Let $UF_i$ be the open neighborhood of $F_i$ satisfying the
conditions in Definition~1.21
for $OF_i=X\setminus \Phi_1$  and $\omega_i$  (recall that $\omega_i$
is an open semicanonical cover of $X\setminus F_i$  for the pair $(X, F_i)$).
 We set $UF_{\alpha} = UF_i\cap OF_{\alpha}$. By construction $\omega_{\alpha}$ is an open semicanonical cover
of $X\setminus F_{\alpha}$
and the neighborhood   $UF_{\alpha}$ of $F_{\alpha}$ is as required in
Definition~\ref{d1.21}, provided that the closed $\Phi$, which we used to define it, is disjoint from $F_{\alpha}$.
\end{proof}

Let us immediately note that Statement 1.25, unlike Statement 1.26, is not used in the proof of the main theorems. Statement 1.25 is given to the article for completeness of description of almost semicanonical.

 The converse is also true.

 \begin{statement}
\label{s1.26}
Suppose that every pair $(X, F_{\alpha})$,
$\alpha\in A_{i}$, is semicanonical. Then so is every pair $(X, F_{i})$,
where $F_i=\bigcup\{F_{\alpha}: \alpha\in A_{i}\}$.
\end{statement}

 \begin{proof}
Since the system $\{F_{\alpha}:\alpha\in A_{i}\}$ is discrete in $X$, it follows that every $F_\alpha$
has a neighborhood $OF_{\alpha}$ such that the system $\{OF_{\alpha}: \alpha\in A_{i}\}$ is discrete in $X$.
There also exists a system of neighborhoods $\{VF_{\alpha}: \alpha\in A_{i}\}$ such that $F_{\alpha}
\subseteq VF_{\alpha} \subseteq  \overline{VF_{\alpha}}\subseteq OF_{\alpha}$. For each pair $(X,
F_{\alpha})$, $\alpha\in A_{i}$, let $\omega_{\alpha}$ be an open semicanonical cover of
$X\setminus F_{\alpha}$, and for each closed set $\Phi_{\alpha}$ disjoint from $F_{\alpha}$, let
$U_1F_{\alpha}$ be the open set in the definition of a semicanonical pair.
We set  $UF_{\alpha}=U_1F_{\alpha}\cap VF_{\alpha}$.  Let us construct
a semicanonical cover $\upsilon_i$  for the pair $(X, F_i)$. Take any open cover of the open set
$X\setminus  \bigcup\{\overline{VF_{\alpha}}: \alpha\in A_{i}\}$. We denote this cover by
$\vartheta_i$. Then,  for each $\alpha\in A_{i}$, we choose those elements of the cover
$\omega_{\alpha}$ which intersect $OF_{\alpha}$ and take their intersection with $OF_{\alpha}$. We obtain
an open cover of the set $OF_{\alpha}$$\backslash$$F_{\alpha}$. Let us denote it by $\omega^0_{\alpha}$; thus,
$\omega^0_{\alpha}=\{G\cap OF_{\alpha}: G\in \omega_{\alpha}\}$ for
$\alpha\in A_{i}$.  The cover $\upsilon_i=\vartheta_i\cup\{\omega^0_{\alpha}:
\alpha\in A_{i}\}$ is semicanonical for the pair $(X, F_i)$. Now, for any closed subset
$\Phi_i$ of $X$  disjoint from $F_i$, the  neighborhood required in
Definition~\ref{d1.21} is $UF_i=\bigcup\{UF_{\alpha}: \alpha\in A_{i}\}$. This completes the proof of the
statement.
\end{proof}

 \begin{theorem}[{\cite[Theorem~1.17]{5}}]
\label{t1.27}
Any almost semicanonical space $X$ is an S-space and has an f-system and an everywhere f-system.
\end{theorem}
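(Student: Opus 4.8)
The plan is to work throughout with the fixed closed $\sigma$-discrete network $\gamma=\{F_\alpha:\alpha\in A\}$, $A=\bigcup_i A_i$, that witnesses the almost semicanonical hypothesis, and to manufacture all three objects from it at once. Since $X$ is a paracompact $\sigma$-space it is perfectly normal, hereditarily normal, hereditarily paracompact and hereditarily a $\sigma$-space, and $\Ind$ is monotone on its subspaces; I will use these heredity facts to reduce every dimension-theoretic claim about a subset $M$ to the ambient machinery (the sum theorems, Vedenisov's lemma, and Corollary~\ref{c1.23}). By Statements~\ref{s1.25} and~\ref{s1.26} the almost semicanonical assumption is equivalent to the assertion that \emph{every} pair $(X,F_\alpha)$, $\alpha\in A$, is semicanonical, so for each $\alpha$ I fix a semicanonical cover $\omega_\alpha$ of $X\setminus F_\alpha$ and, using paracompactness, refine it to be locally finite in $X\setminus F_\alpha$ with the boundaries $\Fr G$ of its members controlled. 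From the neighborhoods furnished by Definition~\ref{d1.21} I extract, for each $\alpha$, a single open set $OF_\alpha\supseteq F_\alpha$, so that $\varphi=\{F_\alpha,OF_\alpha\}_{\alpha\in A}$ is the candidate system; arranging $\{OF_\alpha\}$ to be $\sigma$-locally finite suffices by the remark following Definition~\ref{d1.2}, since $X$ is a paracompact $\sigma$-space.

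For the S-space conclusion, note first that $\gamma$ is $\sigma$-closure-preserving, each discrete family $\gamma_i$ being trivially closure-preserving. To see that $\gamma$ is an S-network I take a closed $F$ and an open $OF\supseteq F$, use perfect normality to fix a decreasing sequence of open sets $OF=W_0\supseteq\overline{W_1}\supseteq W_1\supseteq\overline{W_2}\supseteq\cdots$ with $F=\bigcap_k W_k$, and then select from the $i$-th discrete layer $\gamma_i$ those members $F_\alpha$ lying inside $W_i$. The union $\bigcup\gamma(F)$ of all chosen elements sits between $F$ and $OF$, and the shrinking ensures that any point of $OF\setminus F$ or of $X\setminus OF$ has a neighborhood meeting only finitely many layers' selections, each of which is closed, so $\bigcup\gamma(F)$ is closed. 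The one feature the naive scheme does not supply for free is that the selection still \emph{covers} $F$ across the infinitely many layers; here I invoke the semicanonical covers $\omega_\alpha$, whose defining trapping property (every element meeting a small $UF_\alpha$ stays inside the prescribed neighborhood) lets me match the layer of a network element containing a given $x\in F$ to the depth of the shrink it is required to enter. This yields the S-network and hence the S-space property.

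For the f-system and everywhere f-system I use the characterization of $\Ind$ underlying Definitions~\ref{d1.1}--\ref{d1.2}: for a subset $M$ one has $\Ind M\ge n$ exactly when there is a closed $C\subseteq M$ and a relatively open $OC\supseteq C$ such that every partition between $C$ and $M\setminus OC$ has $\Ind\ge n-1$, i.e. such that the pair \emph{determines} $\Ind M=n$. Given $M$ with $\Ind M=n$, I produce such an abstract witnessing pair $(C,OC)$ and then, using the S-network just obtained together with the density of $\gamma$, realize $C$ by the trace $M\cap F_{\alpha_0}$ of a single network element and take $M\cap OF_{\alpha_0}$ as its neighborhood. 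The verification that this single pair still determines $\Ind M$ is where the semicanonical covers re-enter: restricting $\omega_{\alpha_0}$ to $M$ gives a semicanonical cover for $(M,M\cap F_{\alpha_0})$, and Corollary~\ref{c1.23}, read contrapositively inside the subspace $M$ (legitimate by the heredity facts of the first paragraph), forces every intermediate boundary $\Fr U$ to satisfy $\Ind\Fr U\ge n-1$. For a \emph{closed} $M$ this shows that $\varphi$ is an f-system; for an \emph{arbitrary} $M$ the same argument runs verbatim, since $\Ind$ is monotone and $M$ is again a perfectly normal paracompact $\sigma$-space, giving the everywhere f-system and completing the theorem.

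The hard part will be the realization/transfer step in the third paragraph: extracting from one $\sigma$-locally finite family $\varphi$ a \emph{single} index $\alpha_0$ whose trace on $M$ determines $\Ind M$, simultaneously for every subset $M$. This demands both that the network be fine enough to approximate an arbitrary dimension-witnessing closed set by one element $F_{\alpha_0}$ (rather than by a union, which the S-network would more readily provide) and that semicanonicity genuinely descends to the subspace $M$ so that Corollary~\ref{c1.23} applies there. Coordinating these choices uniformly over all $M$, rather than producing separate per-$M$ data, is the delicate point I expect to require the most care.
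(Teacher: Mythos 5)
You should know at the outset that the paper itself contains no proof of Theorem~\ref{t1.27}: it is quoted from \cite{5} (Theorem~1.17 there), and the only model for its proof inside this paper is the direct construction of an f-system and an everywhere f-system in the proof of Theorem~\ref{t2.1}. Measured against that method, your proposal has a structural gap at exactly the point you yourself flag as ``the delicate point'': the realization step. You take as your candidate f-system the pairs $(F_\alpha, OF_\alpha)$ built on the network elements themselves, and, given $M$ with $\Ind M=n$, you propose to realize an abstract witnessing closed set $C\subseteq M$ by the trace $M\cap F_{\alpha_0}$ of a \emph{single} network element. This cannot work: a network refines the topology at points, not at closed sets; $C$ may be a large closed set while the elements of $\gamma$ containing any given point of it can be arbitrarily small (even singletons), so no single trace can stand in for $C$. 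Worse, the pairs $(F_\alpha, OF_\alpha)$ are structurally unable to support the required argument even if realization were granted: the statement ``$(M\cap F_\alpha, M\cap OF_\alpha)$ does not determine $\Ind M$'' only produces one intermediate open set with small boundary inside the \emph{fixed} neighborhood $OF_\alpha$, whereas the proof needs small partitions between $F_\alpha$ (or the layer union $F_i$) and an \emph{arbitrary} disjoint closed set, chosen after the dimension witnesses $A_1, B_1\subseteq M$ are given. The construction in the proof of Theorem~\ref{t2.1} (and in \cite{5}) is shaped precisely to supply this adaptivity: the f-system consists of pairs $(V_\beta, G_\beta)$, where $\{G_\beta\}$ is a locally finite refinement of the semicanonical cover of $X\setminus F_i$ and $\{V_\beta\}$ is a closed shrinking of it. If no such pair determines the dimension, the intermediate sets $OV_\beta$ form a refined semicanonical cover with $\Ind\Fr OV_\beta\le n-2$, so Corollary~\ref{c1.23} yields a partition of dimension ${\le}\,n-2$ between $F_i$ and \emph{any} disjoint closed set; Vedenisov's lemma then assembles countably many such partitions (one for each ``marked'' network element or layer, whose chosen neighborhood has closure missing $A$ or $B$) into a partition of dimension ${\le}\,n-2$ between $A_1$ and $B_1$, contradicting $\Ind M=n$. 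In that scheme the network elements only ever need to refine the two-set cover $\{X\setminus A,\ X\setminus B\}$ pointwise --- which is exactly what a network gives --- and no element ever has to capture a witnessing closed set.

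Your S-network argument has the same defect in miniature. You select from the layer $\gamma_i$ the elements contained in $W_i$, you correctly observe that the union of the selection is closed, and you concede that it need not cover $F$; but the proposed repair --- that the trapping property of the semicanonical covers ``lets me match the layer of a network element containing a given $x\in F$ to the depth of the shrink'' --- is not an argument. The covers $\omega_\alpha$ live on the complements $X\setminus F_\alpha$ and carry no information about which discrete layer $A_i$ a given network element belongs to; the mismatch between layer index and shrink depth is a purely combinatorial obstruction that semicanonicity does not touch. A smaller but real further issue: Definition~\ref{d1.21} furnishes a neighborhood $UF_\alpha$ \emph{for each} target neighborhood (equivalently, for each disjoint closed set), so there is no canonical single $OF_\alpha$ to ``extract''; since an f-system must fix its neighborhoods once and for all while the witnesses vary, this choice is not a formality, and your proposal never says how it is made. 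In sum, both conclusions of the theorem rest on the two steps you defer, and those steps, as proposed, would fail.
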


 Using Theorems~\ref{t1.5}, \ref{t1.6}, and \ref{t1.7}, we can make conclusions about dimensions
of almost semicanonical spaces.

Recall that, by Definition~\ref{d1.24}, the space $X$ in Theorem~\ref{t1.27} is a paracompact $\sigma$-space.

The class of almost semicanonical spaces is fairly large. For example, it contains all metric spaces and their
images under closed maps, as well as  first countable  paracompact $\sigma$-spaces with a 1-continuous semimetric.
In what follows, we show that it also includes all first countable paracompact $\sigma$-spaces and, therefore,
Nagata spaces and first countable spaces with a countable network.

 However, it follows from  San On's results in \cite{11} that stratifiable spaces are not generally
almost semicanonical.

The following statement gives a useful and important example. The proofs of the main theorems rely on the method
of proof of this statement.

\begin{statement}[{see \cite[Lemma~2.1(p.2.13)]{12}}]
\label{s1.28}
Any metric space $X$ is semicanonical.
\end{statement}

 \begin{proof}
Let $A$ be a closed subset of a metric space $X$ with metric $\rho$. The distance from every
point $x\in X\setminus A$ to the closed set $A$
is positive: $\rho(x,A)>0$. Let $r(x)=\rho(x,A)/4$. Let us remind you that $O_{r(x)}$x=\{y$\in$X: $\rho$(x,y)$<$r(x)\}.
We set $\omega=\{O_{r(x)}x: x\in X\setminus A\}$. Let us show that the open cover $\omega$ of the open set
$X\setminus A$ is semicanonical for the pair $(X,A)$. Take  any closed subset $B$ of $X$ disjoint from $A$.
For every point $y\in A$, we take its neighborhood $O_{r(y)}y$, where $r(y)=\rho(y,B)/4$.
Let $OA=\bigcup\{O_{r(y)}y: y\in A\}$. It follows from
the definition of the distance from a point to a closed set and the triangle inequality in the metric space $X$
that if $OA\cap O_{r(x)}x\neq\varnothing$, then $B\cap O_{r(x)}x=\varnothing$. Therefore, the open cover
$\omega$ is semicanonical  for the pair $(X,A)$.
\end{proof}

Note that the open cover $\omega$ of the open set $X\setminus A$ has the following useful property:
given any point $x\in X\setminus A$, there exists a number $a>0$ such that, for any point $z$ in its
neighborhood $O_{r(x)}x$, we have $\rho(z,A)>a$, i.e.,
$$
\rho(O_{r(x)}x,A)\geq a. \eqno{(1.28)}
$$
This property (1.28) follows from the triangle inequality.

 \section{Main Theorems}

The main results of this paper are Theorems~\ref{t2.1} and \ref{t2.3} and  Corollaries~\ref{c2.2}, \ref{c2.4},
\ref{c2.5}, and~\ref{c2.6}.

First, we consider first countable spaces with a countable network and prove Theorem~\ref{t2.1} for these
spaces. Recall that every space with a countable network is hereditarily finally compact (that is, any
open cover of any its subspace has a countable subcover) and hereditarily paracompact; moreover,
for any space $X$ with a countable network, the relations $\dim X\leqslant \ind X=\Ind X$ hold~\cite{1}.

To prove Theorem~\ref{t2.1}, we show first that any first countable space $X$ with a closed countable network
is almost semicanonical and then that it is an f-space and an everywhere f-space.

 \begin{theorem}
\label{t2.1}
For every first countable space $X$ with a countable network, the following conditions are equivalent:
\begin{enumerate}
\item[(a)] $\dim X\leq  n$\textup;
\item[(b)] $\Ind X\leq  n$\textup;
\item[(c)] $X=\bigcup_{i=1}^{n+1}{X_i}$, where each $X_i$ is a $G_\delta$ set and  $\dim {X_i}\leq0$
for $i=1,2,\dots, n+1$\textup;
\item[(d)] $X$ is the image of a strongly zero-dimensional first countable  space with
a countable network under a perfect ${\le}(n+1)$-to-$1$ map; in other words, there exists a first countable
space $X_0$ with a countable network
and a closed map $f\colon X_0\to X$ such that $\dim X_0=0$ and $|f^{-1}(x)|\leq n+1$ for each $x\in X$.
\end{enumerate}
\end{theorem}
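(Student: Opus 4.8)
The plan is to route the entire argument through the apparatus of Section~1. First I would record the structural facts: a countable network is trivially $\sigma$-discrete (enumerate it as $\{F_n\}$ and place each $F_n$ in its own one-point, hence discrete, layer $\gamma_n=\{F_n\}$), and a space with a countable network is hereditarily paracompact and perfectly normal; thus $X$ is a perfectly (hence hereditarily) normal paracompact $\sigma$-space, and $\dim X\le\ind X=\Ind X$ holds from the outset. With this in hand, the whole theorem reduces to the single assertion that $X$ is \emph{almost semicanonical}: once that is known, Theorem~\ref{t1.27} turns $X$ into an $S$-space carrying both an f-system and an everywhere f-system, and the four equivalences are read off from Theorems~\ref{t1.5}, \ref{t1.6}, and~\ref{t1.7}.

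The heart of the proof --- and the step I expect to be the main obstacle --- is showing that $X$ is almost semicanonical. By Statement~\ref{s1.26} it suffices to prove that each pair $(X,F_n)$ is semicanonical for the network elements $F_n$. Here I would imitate the proof of Statement~\ref{s1.28} for metric spaces, transported along the weak special condensation $f\colon X\to M_0\subseteq Z\times Y$ of Definition~\ref{d1.19}. Fixing a network element $F$ and setting $A=f(F)$, which is closed in the metric space $(M_0,\rho_0)$, one checks that $f(M\cap F)$ is dense in $A$ (because $M\cap F$ is dense in $F$ and $f$ is a continuous bijection carrying $F$ to a closed set). For each $x\in X\setminus F$ I would attach a radius comparable to $\tfrac14\rho_0(f(x),A)$ and, using the countable base $\{O_nx\}$ of Construction~\ref{con1.20}, pick a basic neighborhood $W_x$ so small that $f(W_x\cap M)$ lies in a $\rho_0$-ball of that radius about $f(x)$; property~(1.20), namely $\diam f(O_nx\cap M)<1/n$ for $x\in X\setminus M$, is exactly what makes this possible off the dense set $M$, while first countability is what supplies the base at all. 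The candidate semicanonical cover is then $\omega=\{W_x:x\in X\setminus F\}$, and the triangle inequality in $M_0$ --- applied on the dense metrizable set $M$ and propagated to all of $X$ through the density of $f(M\cap F)$ in $A$ and of $M$ in $X$ --- should yield, for any neighborhood $OF$, a neighborhood $UF$ with the required property, just as in Statement~\ref{s1.28}. The delicate point is that $f$ is only a condensation (its inverse is discontinuous) and the underlying Arkhangel'skii semimetric is only a semimetric, so all distances must be measured in $M_0$ and the control of $X$-neighborhoods of $F$ by $M_0$-distances has to be routed carefully through $M$; this is precisely where the special construction of Remark~\ref{r1.18} and Construction~\ref{con1.20} earns its keep, and it is also why first countability cannot be dropped.

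With $X$ almost semicanonical, Theorem~\ref{t1.27} gives that it is an $S$-space possessing an f-system and an everywhere f-system, and the equivalences follow. The equivalence $\mathrm{(a)}\Leftrightarrow\mathrm{(b)}$ is immediate from Theorem~\ref{t1.5}, which yields $\Ind X=\dim X$. The equivalence $\mathrm{(a)}\Leftrightarrow\mathrm{(c)}$ is Theorem~\ref{t1.6} applied to the hereditarily normal everywhere f-space $X$, giving the decomposition into $n+1$ zero-dimensional $G_\delta$ pieces. For $\mathrm{(d)}\Rightarrow\mathrm{(a)}$ I would simply note that the domain in (d), being a first countable space with a countable network, is itself a strongly zero-dimensional paracompact $\sigma$-space, so that (d) is a special case of condition~(b) of Theorem~\ref{t1.7} and hence forces $\dim X\le n$. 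The only genuinely new work is $\mathrm{(a)}\Rightarrow\mathrm{(d)}$: starting from the $G_\delta$ decomposition of (c), I would feed it into the construction behind Theorem~\ref{t1.7} to produce the zero-dimensional domain $X_0$ together with the perfect $\le(n+1)$-to-$1$ map onto $X$, and then verify that, because this domain is built over $X$ from the $G_\delta$ pieces $X_i$, it inherits first countability and a countable network from $X$. I expect these inheritance checks --- together with confirming that the resulting finite-to-one projection is genuinely closed (which is why the $G_\delta$, rather than merely open, nature of the pieces matters) --- to be routine compared with the almost-semicanonical step, where essentially all the difficulty is concentrated.
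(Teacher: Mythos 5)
Your high-level architecture is legitimate and in fact coincides with the paper's proof of the more general Theorem~\ref{t2.3}: treat the countable network as a $\sigma$-discrete one, prove that each pair $(X,F_i)$ is semicanonical, conclude that $X$ is almost semicanonical (Statement~\ref{s1.26} is not even needed when each layer of the network is a singleton), invoke Theorem~\ref{t1.27} to obtain an f-system and an everywhere f-system, and read off the equivalences (a)--(d) from Theorems~\ref{t1.5}, \ref{t1.6}, and~\ref{t1.7}. The paper's own proof of Theorem~\ref{t2.1} deliberately avoids Theorem~\ref{t1.27} and constructs the f-system and everywhere f-system by hand (via the systems $\mu_{ij}$, Corollary~\ref{c1.23}, and Vedenisov's lemma), but that difference is immaterial; also, both you and the paper leave essentially implicit the check that the space $X_0$ in (d) may be taken first countable with a countable network.

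The genuine gap is inside the semicanonicity argument, at the point you dispose of with ``just as in Statement~\ref{s1.28}'': the construction of the neighborhood $UF$ of $F$ demanded by Definition~\ref{d1.21}. In Statement~\ref{s1.28} that neighborhood is a union of balls $O_{r(y)}y$ over $y\in A$ with $r(y)=\rho(y,B)/4>0$; the recipe lives or dies on every point of $A$ having positive distance to $B$. Here it fails exactly at the points $y^{\ast}\in F\setminus M$: after thickening $B$ to the regular closed set $\Phi=\overline{OB}$ (needed so that $\Phi\cap M$ is dense in $\Phi$ and distances to $\Phi$ can be measured through $M$ at all), there is no reason why $\rho_0\bigl(f(y^{\ast}),f(\Phi\cap M)\bigr)>0$: $f$ is only a condensation, so $f(\Phi)$ need not be closed in $M_0$ and its closure may reach $f(y^{\ast})$, and the Arkhangel'skii semimetric has no triangle inequality with which to convert the $X$-separation of $y^{\ast}$ from $\Phi$ into $\rho_0$-separation. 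Hence no radius can be attached to points of $F\setminus M$, and the ball-union construction of $UF$ does not transfer. What replaces it in the paper is the actual crux of the whole proof: the ratio function $g(y)=\rho(y,F)/(\rho(y,F)+\rho(y,\Phi))$ on $M$, the verification (using property (1.20), the density of $F\cap M$ in $F$, and the extension criterion of \cite[Problem~288, Chap.~2, Sec.~4]{13}) that setting $\tilde g=0$ on $F\setminus M$ gives a continuous function on $M\cup F$, and the choice of $UF$ as an $X$-open set whose trace on $M\cup F$ is the sublevel set $\tilde g^{-1}[0,1/6)$. This continuous-extension device is precisely what produces an open neighborhood of all of $F$, including $F\setminus M$, with metrically controlled trace on $M$; it is also where first countability and (1.20) are actually spent. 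In your sketch, (1.20) is used only to shrink the covering elements $W_x$, which is the easy side of the problem (and is not even needed if, as in the paper, the cover consists of the full preimages $f^{-1}(O_{r(x)}x)$ of balls centered at points of $f(M)\setminus f(F)$, whose entire $f$-images are automatically controlled). Until you supply this extension argument, or an equivalent construction of $UF$, the proof is incomplete at its central point.
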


 \begin{proof} Given a first countable space  $X$ with a countable network, let us
construct  an everywhere f-system in the space $X$. After that, Theorem~\ref{t2.1} will follow from
Theorems~\ref{t1.5}, \ref{t1.6}, and \ref{t1.7}. To prove the existence of an f-system and an everywhere
f-system in $X$, it suffices to show that $X$ is almost
semicanonical, but  we deliberately do not use Theorem~\ref{t1.27} and construct an f-system and an everywhere
f-system straightforwardly.

 Let $\{F_i: i=1,2,\dots\}$ be a closed countable network of $X$. By Theorem~\ref{t1.16}
$X$ has a dense metric subspace $M\subseteq X$ such that  $M\cap F_i$  is dense in $F_i$ for
$i=1,2,\dots$\,.  Moreover, by Remark~\ref{r1.18} there exists a weak special
condensation (see Definition~\ref{d1.19})  $f\colon X\to M_0$  of $X$ onto a metric
space $M_0$ with the following properties:
\begin{itemize}
\item
$f(F_i)$ is closed in $M_0$ for each $i=1,2,\dots$;
\item
the subspace $M$ of $X$ is homeomorphic to the subspace $f(M)$ of the metric space $M_0$;
\item
there exists a sequence $\{\omega_i: i=1,2,\dots\}$ of open locally finite covers of $X$ such
that $\diam f(G)<1/i$ for every $G\in \omega_i$  and $\omega_{i+1}\succ\omega_i$;
\item
$X$ is semimetrizable, and it  admits an Arkhangel'skii semimetric $d(x,y)$ such that
if $x\in M$ and $y\in M$, then $d(x,y)=\rho_0(f(x), f(y))$, where $\rho_0$ is the metric on $M_0$, and if
$x\in X\setminus M$, then $\diam (O_nx\cap M)<1/n$ (see Construction~\ref{con1.20}).
\end{itemize}
We assume that the metrizable subspace $M$ of $X$ is endowed with the metric $\rho$ defined by
$\rho(x,y)=\rho_0(f(x),f(y))$ (for $x,y\in M$). Clearly, we have  $\diam(G\cap M)<1/i$
for $G\in \omega_i$.

Fix an $i\in \mathbb N$. By analogy with Statement~\ref{s1.28}, we construct  an
open cover of the open set $M_0\setminus f(F_i)$ in $M_0$ by taking the open neighborhood $Ox=O_{r(x)}x$,
where $r(x)=\rho_0(x,f(F_i))/4$, of each point
$x\in f(M)\setminus f(F_i)$ in the space $M_0$. For any metric space $Z$ with
metric $d$ and any set $A\subseteq Z$, the function $l(z)=d(z,A)$,  $z\in Z$,
is continuous on $Z$; therefore, since the set
$f(M)\cap (M_0\setminus f(F_i))$ is dense in $M_0\setminus f(F_i)$, it follows that
$$
\varpi_i=\{O_{r(x)}x: x\in (f(M)\setminus f(F_i),  r(x)=\rho_0(x,f(F_i))/4\}
$$
is a cover of the
set $M_0\setminus f(F_i)$ open in the
space $M_0$. Note that, for $x\in f(M)\setminus f(F_i)$, we have
$\rho_0(x,f(F_i)) = \rho_0(x,f(F_i\cap M))$ in $M_0$, because
$F_i\cap M$ is dense in~$F_i$.

Consider the open cover
$$
\nu_i=f^{-1}(\varpi_i) =
\{f^{-1}(O_{r(x)}x): x\in (f(M)\setminus f(F_i),
r(x)=\rho_0(x,f(F_i))/4\}
$$
of $X\setminus F_i$. By construction, the open cover $\nu_i\cap M$
of the open
subset $M\setminus F_i$ of $M$ is semicanonical for the pair $(M, F_i\cap M)$ ($F_i\cap M$ is closed in
$M$). Let us show that this cover $\nu_i$ of $X\setminus F_i$ is semicanonical for the pair $(X,
F_i)$.

Take any closed  subset $\widetilde \Phi$ of $X$ disjoint from $F_i$. The set $\widetilde\Phi$
has a neighborhood $O\widetilde \Phi$ such that $\overline{O\widetilde\Phi}\cap F_i=\varnothing$. Let us
construct  an open neighborhood $UF_i$ of $F_i$ satisfying the condition
in Definition~\ref{d1.21} that  if $G\in \nu_i$
and  $G\cap UF_i\neq\varnothing$, then $G\subseteq (X\setminus \overline{O{\widetilde\Phi}}$), i.e.,
$G\cap \overline{O\widetilde\Phi}=\varnothing$. Clearly, in this case, we also have $G\cap \widetilde\Phi
=\varnothing$. Let us denote the closed
set $\overline{O\widetilde\Phi}$ by $\Phi$. Then $\Phi$ is a canonical closed set and $\Phi\cap M$
is dense in~$\Phi$.

Recall that on the metrizable subspace $M$ of $X$ the metric $\rho(x,y)=\rho_0(f(x),f(y))$ is defined
(for $x,y\in M$). Let $y\in M$. We refer to the distance $\rho(y,F_i\cap M)$  in the metric of $M$
as the \emph{distance from the point
$y\in M$ to the closed subset  $F_i$ of $X$} and denote it by $\rho(y,F_i)$; thus,
$\rho(y,F_i)=\rho(y,F_i\cap M)$ and $\rho(y,\Phi)=\rho(y,\Phi\cap M)$ for $y\in M$.
Consider the function
$$
g_i(y)=\rho(y,F_i)/(\rho(y,F_i)+\rho(y,\Phi))
$$
on the metric subspace $M$ of $X$. It is defined  and continuous on $M$, because both functions $\rho(y,F_i)$ and
$\rho(y,\Phi)$ are continuous on $M$, since these are distances from points to closed sets in the metric space
$M$. The function $g_i(y)$ takes  $M$ to $I=[0,1]$; we have $g_i(y)=0$ if and only if $y\in (F_i\cap M)$ and
$g_i(y)=1$ if and only if $y\in (\Phi\cap M)$. It follows from the definition of the function $g_i(y)$
that $g_i(y)<\rho(y,F_i)$ if $y\notin\Phi\cap M$.

 The subset of $M$ on which $g_i(y)<1/6$ is open in $M$ (because $g_i$ is continuous on $M$), and  it contains
$F_i\cap M$. Let us denote this subset by $U_M(F_i\cap M)$. This is an open subset of $M$ containing
only points of $M$. Take any point $y\in U_M(F_i\cap M)$. We have $g_i(y)<1/6$, i.e., $\rho(y,F_i)
/(\rho(y,F_i)+\rho(y,\Phi))<1/6$, whence $\rho(y,\Phi)>5\rho(y,F_i)$. It follows from the definition of the
distance from a point to a closed set and the triangle inequality on the metric space $M$
that there exists a point $y_1\in (F_i\cap M)$ for which $\rho(y_1, \Phi) > 4\rho(y_1, y)$. Take the
neighborhood $O_{r({y_1})}y_1$ of $y_1$, where $r(y_1)=\rho(y_1,\Phi)/4$. We
have  $y\in O_{r({y_1})}y_1$.

 We extend $g_i$ to
a continuous function $\tilde g_i$ on $M\cup F_i$ by setting $\tilde g_i(y)=0$
for each point $y\in F_i\setminus M$ and $\tilde g_i(y)=g_i(y)$ for
$y\in M$. By definition, $\tilde g_i$ takes  $M\cup F_i$ to $I=[0,1]$.
Let us prove that this extension is continuous.

 According to \cite[Problem~288, Chap.~2, Sec.~4]{13},
to prove that the continuous function $g_i$ defined on the subspace $M$ of $M\cup F_i$ (which is dense in
$M\cup F_i$ by Theorem~\ref{t1.16}) has a continuous extension $\tilde g_i$ to
the whole set $M\cup F_i$, it suffices to check that $g_i$ can be continuously extended to
$M\cup\{y^{\ast}\}$, where $y^{\ast}$ is an arbitrary point in $F_i\setminus M$, i.e.,
$y^{\ast}\in F_i\setminus M$. Extending $g_i$ to $M\cup \{y^{\ast}\}$ by setting
$g_i(y^{\ast})=0$, we obtain a function $g_i^{\ast}(y)$. Let us show that it is continuous, for which
we must check its
continuity at the point~$y^{\ast}$.

 Take an $\varepsilon>0$. Let us show that the point $y^{\ast}$ has an open  neighborhood $Oy^{\ast}$
in $M\cup\{y^{\ast}\}$  such that $g_i(y)<\varepsilon$ for each $y\in Oy^{\ast}$.

 There exists an $n\in \mathbb N$ for which $1/n<\varepsilon$. We define an open neighborhood $O_n y^{\ast}$ of
 $y^{\ast}$ in $X$ as in Construction~\ref{con1.20}.
 By virtue of property~(1.20) we
have $\diam(O_n y^{\ast}\cap  M)<1/n$. Since $y^{\ast} \notin \Phi$, it follows that there exists an open
 neighborhood $O^2 y^{\ast}$ of $y^{\ast}$ in $X$ for which $O^2 y^{\ast}\cap \Phi=\varnothing$.
 The intersection $O y^{\ast} = O^2 y^{\ast}\cap O_n y^{\ast} \cap (M\cup \{y^{\ast}\})$ is a
neighborhood of $y^{\ast}$ in $M\cup  \{y^{\ast}\}$.
 For each point $y\in O y^{\ast}$, we have $\rho(y, F_i\cap  M) \leq 1/n <\varepsilon$. In the
case under consideration,  $y\notin \Phi\cap M$. Therefore,
  $g_i(y)<\rho(y,F_i)<\varepsilon$. This proves the continuity of $g_i$ at the point~$y^{\ast}$.

 Thus, the function $\tilde g_i\colon M\cup  F_i\to I$  is continuous on the subspace
$M\cup  F_i$. Consider the  set $\tilde g_i^{-1}[0,1/6)$, which is open in $M\cup  F_i$. We denote it by
$\widetilde U F_i$. This set is an open neighborhood of  $F_i$ in $M\cup  F_i$.  By construction
$F_i\cup U_M(F_i\cap M)=\widetilde UF_i$. Let $UF_i$ be an open set in $X$ for which $UF_i\cap M\cup  F_i =
\widetilde U F_i$.

 The open neighborhood  $UF_i$ of the closed set $F_i$ in $X$ is as required in Definition~\ref{d1.21};
namely, if $G\in \nu_i$ and $G\cap UF_i \neq \varnothing$, then
$G\subseteq X\setminus \overline{\widetilde O{\Phi}}$, i.e., $G\cap \overline{\widetilde O{\Phi}}=\varnothing$.
This follows from the properties of $U_M(F_i\cap M)$ mentioned above. Clearly,  we also have
$G\cap \widetilde\Phi=\varnothing$. Thus, the cover $\nu_i$ of the open set $X\setminus F_i$ is
semicanonical for the pair $(X,F_i)$. Such a  construction can be performed for each $i\in \mathbb N$; i.e., for
each  $i\in \mathbb N$, there exists an open cover $\nu_i$ of the open set $X\setminus F_i$ which is
semicanonical for the pair $(X,F_i)$.

 Now we construct  a system of closed sets and their neighborhoods in the space $X$ which determine the
dimension $\Ind X$. Suppose that $\Ind X=n$.

 Consider the open cover
$$
\nu_i=f^{-1}(\varpi_i)=\{f^{-1}(O_{r(x)}x):
x\in (f(M)\setminus f(F_i),   r(x)=\rho_0(x, f(F_i))/4\}
$$
of the open set $X\setminus F_i$. Let
$\zeta_i= \{G_{\beta}: \beta\in B_i\}$ be its open locally finite refinement,
and let $\{V_{\beta}: \beta\in B_i\}$ be a closed shrinking of $\zeta_i$ constructed in a standard way. Each $V_{\beta}$ is closed in X (Remark 1.22).
Consider the systems  $\mu_i=\{(V_{\beta}, G_{\beta}):
\beta\in B_i\}$ of closed sets and their neighborhoods for $i=1,2, \dots$\,. Suppose that $i\in \mathbb N$ and the
system $\mu_i$ does not determine the dimension~$\Ind X$.
 Then, for each $\beta\in B_i$, the closed set $V_{\beta}$ has a neighborhood $OV_{\beta}$
such that
$$
V_{\beta}\subseteq OV_{\beta}\subseteq \overline{OV_{\beta}}\subseteq G_{\beta}\quad\text{and}\quad
\Ind\Fr OV_{\beta} \leq n-2.
$$
Let $\Phi$ be any closed subset of $X$ disjoint from $F_i$.
By Corollary~\ref{c1.23} there exists a partition $L$ with $\Ind L\leq n-2$ between the sets $\Phi$
and $F_i$. Thus, if none of the systems $\mu_i$ determines~$\Ind X$, then, for every element $F_i$
of the network and any neighborhood $WF_i$ of $F_i$, there exists a partition $L$ with $\Ind L\leq n-2$
between the closed sets $F_i$ and $X\setminus WF_i$. According to Vedenisov's lemma,
given any two disjoint closed sets in $X$, there exists a partition $P$ with $\Ind P\leq n-2$ between them.
Therefore, $\Ind X\leq n-1$, while we have assumed that $\Ind X=n$. This contradiction proves
that each system $\mu_i$, $i=1,2, \dots$, determines the dimension $\Ind X$.

 The system $\mu_i$, $i=1,2, \dots$, has a drawback, which does not allow us to apply Statement~\ref{s1.3};
namely, it is not $\sigma$-locally finite in the space $X$. This is easy to fix. As mentioned above,
$f(F_i)$ is closed in $M_0$ for each network element $F_i$, $i=1,2,\dots$,
where $f$ is a weak special condensation (see Definition~\ref{d1.19})
of the given  space $X$ onto the metric space $M_0$. Consider the open  subset
$$
W_j(F_i)=\{x\in X: \rho_0(f(x),f(F_i))<1/j\}
$$
of $X$. Let $W_j=X\setminus \overline{W_j(F_i)}$, and let
$\mu_{ij}$ denote the set of all
elements of $\mu_i$ lying in $W_j$, i.e.,
$$
\mu_{ij} = \{(V_{\beta}, G_{\beta}): G_{\beta}\subseteq W_j, \beta\in B_i\}.
$$
Then the system $\{G_{\beta}:  G_{\beta}\subseteq W_j,
\beta\in B_i\}$ is locally finite in $X$. Since $\zeta_i$ is a refinement of $\nu_i$, it follows that each element
 $G_{\beta}\in \zeta_i$  is a subset of some element of $\nu_i$. In view of Property~(1.28), there
exists an $a>0$ for which $\rho_0(f(G_{\beta}),f(F_i))>a$. We have  $1/j<a$ for some $j\in \mathbb N$.
Therefore,
$G_{\beta}\subseteq W_j$, whence $\mu_i=\bigcup \{\mu_{ij}: j=1,2,\dots \}$.

 We have obtained $\sigma$-locally finite systems $\mu_{ij}$, $i,j=1,2,\dots$, of closed sets and their
neighborhoods in $X$ determining the dimension $\Ind X$. Let us show that the set
$\mu=\{\mu_{ij}: i,j=1,2,\dots \}$ is an f-system in $X$, i.e., that it  determines not only the dimension
$\Ind X$ but also the dimension $\Ind $ of
any closed subset of $X$; in other words, if $D\subseteq X$ and $D$ is closed in $X$, then the system
$D\cap \mu$ determines $\Ind D$ (note
that $\Ind D\leq \Ind X$ by the monotonicity theorem).

 To do this, it suffices to show that each closed  subspace $D$ of $X$ is almost semicanonical.
Let $F_i^D$  be a network element in $D$,  and let $\Phi_1$ be a subset of $D$ closed in $D$ (and hence in $X$)
and disjoint from  $F_i^D$.
There exists a network element $F_i$ in $X$ for which $D\cap F_i =F_i^D$. Clearly,
$F_i\cap \Phi_1=\varnothing$. Recall that the cover $\nu_i$ of $X\setminus F_i$ constructed above  is
semicanonical for the pair $(X, F_i)$. The set $F_i$ has an open  neighborhood $UF_i$  in $X$ satisfying the
conditions in Definition~\ref{d1.21}. We set $UF_i^D=UF_i\cap D$. The open (in $D$) cover $\nu_i\cap D$
of the subspace $D$ is semicanonical for the pair $(D,F_i^D)$ in $D$. Indeed, let $G_D\in \nu_i\cap D$,
i.e.,  $G_D=G\cap D$ for some $G\in \nu_i$, and let $G_D\cap UF_i^D \neq \varnothing$. Then $G\cap UF_i
\neq \varnothing$.
Therefore, $G\cap \Phi_1=\varnothing$ and hence $G_D\cap \Phi_1=\varnothing$. So each closed  subspace $D$ of $X$ is almost semicanonical. Thus the family $D\cap \mu$ determines the dimension~$\Ind D$.

 We have shown that the  $\sigma$-locally finite family $\mu=\{\mu_{ij}: i,j=1,2,\dots \}$ is an f-system
in the space $X$, i.e., this family  determines not only the dimension $\Ind X$ but also the dimension $\Ind$ of
any closed subset of $X$; therefore, $\dim X=\Ind X$ by Theorem~\ref{t1.5}. Thus, for any first countable
space $X$ with a countable network, we have
$$
\ind X=\dim X=\Ind X,
$$
because $\dim X\leq  \ind X\leq \Ind X$ for finally compact spaces~\cite[Sec.~8,
Lemma~2]{1}.

 Now let us show that the $\sigma$-locally finite family $\mu=\{\mu_{ij}: i,j=1,2,\dots \}$ is an
everywhere f-system in the space $X$, i.e., this family determines not only the dimension $\Ind X$  but
also the dimension $\Ind$ of any
subset of  $X$. Let us fix an $i\in \mathbb N$ and consider the family~$\mu_i$.

 Now we need Vedenisov's lemma. Let $D$ be any subset of $X$. Suppose that $\Ind D=k$  (note
that  $\Ind D \leq  \Ind X$ by the monotonicity theorem). Let $A_1$  and  $B_1$ be two disjoint closed sets
in $D$ determining the dimension $\Ind D$ (i.e., such that $\Ind L\geq k-1$ for any closed partition $L$
between $A_1$  and  $B_1$  in $D$). There are closed sets $A$ and $B$  in $X$
for which $D\cap A=A_1$ and $D\cap B=B_1$ (the sets $A$ and $B$ may intersect outside
the subspace $D$). Take any point $p\in D$.  It has an open
neighborhood $Op$ in $X$  such that either $\overline{Op}\cap B=\varnothing$ or
$\overline{Op}\cap A=\varnothing$.  If $\gamma=\{F_i: i=1,2,\dots\}$ is a countable network in $X$, then
there exists an $F_i\in \gamma$ such that $p\in F_i\subseteq Op$. We refer to such an element of
$\gamma$ as marked. All marked elements of the countable network $\gamma$ for $p\in D$
form a countable system of network elements covering $D$. Renumbering them, we obtain a countable
family $\gamma_0=\{F_j:  j=1,2,\dots\}$ of marked elements of $\gamma$
covering $D$ which form a network of the subspace $D$. Thus, $D\cap \gamma_0$ is a network of $D$,
$\gamma_0\subseteq \gamma$,
$ D\subseteq \cup  \gamma_0$, and each $F_j\in \gamma_0$ is disjoint from either $A$ or
$B$. Each network element $F_j\in \gamma_0$ has a neighborhood $WF_j$
satisfying the condition in Vedenisov's lemma, i.e., such that either $\overline{WF_j}\cap A=\varnothing$ or
$\overline{WF_j}\cap B=\varnothing$. Therefore,
 given each pair $(X, F_j)$, there exists a  cover $\nu_ j$ of $X\setminus F_j$ semicanonical for this pair.
 The set $F_j$ has an open neighborhood $UF_j$ in $X$ satisfying the condition in Definition~\ref{d1.21}
for the pair $(F_j, X\setminus WF_j)$ of  closed sets.
Let $X\setminus WF_j=\Phi_j$.  Clearly, $\Phi_j\cap F_j=\varnothing$. We set
$$
UF_j^D=UF_j\cap D,\quad F_j^D=F_j\cap D,\quad\text{and}\quad \Phi_j^D=\Phi_j\cap D.
$$

 An argument similar to that used in the construction of an f-system proves that
the open cover $\nu_j\cap D$ of the subspace $D$ is
semicanonical for the pair $(F_j^D, \Phi_j^D)$ in $D$, i.e., if $G\in \nu_j\cap D$ and
$G\cap UF_j^D \neq \varnothing$, then $G\cap \Phi_j^D=\varnothing$. If the systems $\mu_j\cap D$ do not
determine the dimension $\Ind D$, then, for each $j=1,2,\dots$, there is a partition $P_j$ of an appropriate
dimension $\Ind P_j\leq k-2$
between $F_j^D$ and $\Phi_j^D$ in $D$
(see Corollary~\ref{c1.23}).
    Thus, by Vedenisov's lemma, there is a partition $L$ of an appropriate dimension $\Ind L\leq k-2$ between
the sets $A_1$  and  $B_1$  in $D$. But in the case under consideration, any
closed partition $L$ between $A_1$  and  $B_1$  in $D$ has dimension $\Ind L\geq k-1$.  Therefore,
the family $D\cap \mu$ determines the dimension  $\Ind D$.  Thus, the $\sigma$-locally finite
family $\mu=\{\mu_{ij}: i,j=1,2,\dots \}$  is
an everywhere f-system in $X$, i.e., it determines not only the dimension $\Ind X$ but
also the dimension $\Ind$ of any  subset of $X$. The application of Theorems~\ref{t1.6} and~\ref{t1.7}
completes the proof of Theorem~\ref{t2.1}.
\end{proof}

 Theorem~\ref{t2.1} and assertion (d) of Theorem~\ref{t2.1} imply the following result.

  \begin{corollary}
\label{c2.2}
Let  $X$ be a  first countable space with a countable network. Then $\ind X=\dim X=\Ind X$
and $X$ is an S-space \textup(see Definition~\ref{d1.8}\textup).
\end{corollary}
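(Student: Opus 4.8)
The plan is to treat the two assertions of the corollary separately: I would read off the dimension coincidence directly from Theorem~\ref{t2.1}, and I would deduce the S-space property from the almost semicanonical structure already exhibited in the proof of that theorem, via Theorem~\ref{t1.27}.

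First I would dispose of the dimensional equalities. Since $X$ has a countable network it is finally compact, so the inequalities $\dim X\leq \ind X\leq \Ind X$ hold (as recalled at the start of this section, citing~\cite{1}). On the other hand, the equivalence of (a) and (b) in Theorem~\ref{t2.1} asserts that $\dim X\leq n$ if and only if $\Ind X\leq n$ for every $n$, whence $\dim X=\Ind X$. Combining these two facts forces $\ind X=\dim X=\Ind X$. This is exactly the chain of reasoning already recorded at the close of the proof of Theorem~\ref{t2.1}, so nothing new is required for this half of the statement.

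For the S-space property I would argue through Definition~\ref{d1.24}. A countable closed network $\{F_i:i=1,2,\dots\}$ is trivially $\sigma$-discrete: writing $A=\mathbb N$, $A_i=\{i\}$, and $\gamma_i=\{F_i\}$, each layer $\gamma_i$ is a single-element, hence discrete, family, and $F_i=\bigcup\gamma_i$ coincides with the network element $F_i$ itself. The proof of Theorem~\ref{t2.1} constructs, for every $i$, an open cover $\nu_i$ of $X\setminus F_i$ that is semicanonical for the pair $(X,F_i)$; thus each pair $(X,F_i)$ is semicanonical. By Definition~\ref{d1.24} this says precisely that $X$ is almost semicanonical. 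Theorem~\ref{t1.27} then yields at once that $X$ is an S-space in the sense of Definition~\ref{d1.8} (and, as a byproduct, that $X$ carries an f-system and an everywhere f-system, in agreement with what the proof of Theorem~\ref{t2.1} produced directly).

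The only delicate point is the bookkeeping verification that the countable network genuinely fits the hypotheses of Definition~\ref{d1.24} — that it may be regarded as a $\sigma$-discrete network whose layers are discrete — so that the semicanonical covers $\nu_i$ built inside the proof of Theorem~\ref{t2.1} certify almost semicanonicity in the exact form demanded by Theorem~\ref{t1.27}. Once this is confirmed, no genuinely new argument is needed: the dimension coincidence is immediate from Theorem~\ref{t2.1} together with final compactness, and the S-space conclusion is immediate from Theorem~\ref{t1.27}. I therefore expect the corollary to be essentially a repackaging of material already present in the proof of the main theorem, with the verification of the framework of Definition~\ref{d1.24} being the one step warranting explicit mention.
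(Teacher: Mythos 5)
Your proposal is correct and follows essentially the same route as the paper: the chain $\ind X=\dim X=\Ind X$ is exactly the final-compactness argument already recorded at the end of the proof of Theorem~\ref{t2.1}, and the S-space property is obtained, as intended, by noting that the covers $\nu_i$ constructed there make $X$ almost semicanonical (the countable network serving as a $\sigma$-discrete network with singleton discrete layers) and then invoking Theorem~\ref{t1.27}.
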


   \begin{theorem}
\label{t2.3}
For any first countable paracompact $\sigma$-space $X$, the following conditions
are equivalent:
\begin{enumerate}
\item[(a)]		
$\dim X\leq  n$\textup;
\item[(b)]
$\Ind X\leq  n$\textup;
\item[(c)]
$X=\bigcup\{X_i : i=1,2,\dots, n+1\}$, where~$X_i$  is a $G_\delta$  set and  $\dim {X_i}\leq0$
for $i=1,2,\dots n+1$\textup;
\item[(d)]
the space $X$ is the image of a strongly zero-dimensional first countable $\sigma$-space under a
perfect ${\le}(n+1)$-to-$1$ map; in other words, there exists a first countable paracompact
$\sigma$-space $X_0$ and a closed map $f\colon X_0 \to X$ such that $\dim X_0=0$ and $|f^{-1}(x)|\leq n+1$
for each $x\in X$.
\end{enumerate}
\end{theorem}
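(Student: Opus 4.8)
The plan is to reduce everything to showing that $X$ is \emph{almost semicanonical} in the sense of Definition~\ref{d1.24}; once this is established, all four equivalences follow from the apparatus already in place. Indeed, Theorem~\ref{t1.27} then guarantees that $X$ is an S-space possessing both an f-system and an everywhere f-system, and since $X$ is a paracompact $\sigma$-space, the equivalence (a)$\Leftrightarrow$(b) follows from Statement~\ref{s1.3} (by the remark after Definition~\ref{d1.2} a $\sigma$-locally finite f-system suffices), the equivalence (a)$\Leftrightarrow$(c) from Theorem~\ref{t1.6} (a paracompact $\sigma$-space is hereditarily normal), and (a)$\Leftrightarrow$(d) from Theorem~\ref{t1.7}. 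Thus the entire proof rests on verifying the semicanonical hypothesis.

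To set up, I would fix a closed $\sigma$-discrete network $\gamma=\{F_\alpha:\alpha\in A\}$ with $A=\bigcup_i A_i$ and each $\gamma_i=\{F_\alpha:\alpha\in A_i\}$ discrete, and invoke Theorem~\ref{t1.16} to obtain the dense metrizable subspace $M$ with $M\cap F_\alpha$ dense in $F_\alpha$ for every $\alpha$. By Remark~\ref{r1.18} this yields a weak special condensation $f\colon X\to M_0$ onto a metric space, the sequence $\{\omega_i\}$ of open locally finite covers with $\diam f(G)<1/i$, and the Arkhangel'skii semimetric of Construction~\ref{con1.20}, whose key feature is property~(1.20): $\diam(O_nx\cap M)<1/n$ for $x\in X\setminus M$. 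I endow $M$ with the metric $\rho(x,y)=\rho_0(f(x),f(y))$ inherited from $M_0$.

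The core step is to show that each \emph{single} pair $(X,F_\alpha)$ is semicanonical, by transcribing the metric construction carried out for one network element in the proof of Theorem~\ref{t2.1}. Since $F_\alpha$ is closed and $M\cap F_\alpha$ is dense in it, I form $\varpi_\alpha=\{O_{r(x)}x:x\in f(M)\setminus f(F_\alpha),\ r(x)=\rho_0(x,f(F_\alpha))/4\}$, pull it back to $\nu_\alpha=f^{-1}(\varpi_\alpha)$, and for a closed $\widetilde\Phi$ disjoint from $F_\alpha$ (with $\Phi=\overline{O\widetilde\Phi}$) build the function $g_\alpha(y)=\rho(y,F_\alpha)/(\rho(y,F_\alpha)+\rho(y,\Phi))$ on $M$. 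Using property~(1.20) exactly as before, $g_\alpha$ extends continuously to $M\cup F_\alpha$ by setting it to $0$ on $F_\alpha\setminus M$, and the sublevel set $\tilde g_\alpha^{-1}[0,1/6)$ delivers the neighborhood $UF_\alpha$ required in Definition~\ref{d1.21}. Having done this for every $\alpha\in A_i$, I then apply Statement~\ref{s1.26} to conclude that each pair $(X,F_i)$, with $F_i=\bigcup\{F_\alpha:\alpha\in A_i\}$, is semicanonical, which is precisely the assertion that $X$ is almost semicanonical.

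The main obstacle is not the per-element construction, which is essentially identical to the countable-network case, but the passage from the individual closed sets $F_\alpha$ to their discrete unions $F_i$; this is exactly where the $\sigma$-space structure enters. Discreteness of $\gamma_i$ supplies the disjointing neighborhood systems $\{OF_\alpha\}$ and $\{VF_\alpha\}$ that make the assembly of the semicanonical cover $\upsilon_i=\vartheta_i\cup\{\omega^0_\alpha:\alpha\in A_i\}$ in Statement~\ref{s1.26} valid, and local finiteness (in place of the mere countability available in Theorem~\ref{t2.1}) is what lets Corollary~\ref{c1.23} together with the sum theorem for $\Ind$ produce partitions of dimension $\le n-2$. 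With $X$ now almost semicanonical, the invocation of Theorems~\ref{t1.27}, \ref{t1.6}, \ref{t1.7} and Statement~\ref{s1.3} described above completes the proof.
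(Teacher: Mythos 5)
Your proposal is correct and takes essentially the same route as the paper: it reduces Theorem~\ref{t2.3} to almost semicanonicity by transcribing the per-element metric construction of Theorem~\ref{t2.1} to each pair $(X,F_\alpha)$, passes to the discrete unions $F_i$ via Statement~\ref{s1.26}, and then invokes Theorem~\ref{t1.27} together with Theorems~\ref{t1.6} and~\ref{t1.7}. The only cosmetic difference is that you cite Statement~\ref{s1.3} for (a)$\Leftrightarrow$(b) where the paper cites Theorem~\ref{t1.5}, which the paper itself notes is a complete analogue for paracompact $\sigma$-spaces.
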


   \begin{proof}
Let $X$ be a  first countable paracompact $\sigma$-space, and  let $\gamma=\{F_{\alpha}: \alpha\in A\}$
be a closed $\sigma$-network in $X$ such that $A=\bigcup_{i=1}^{\infty} A_{i}$ and
the system $\gamma_i=\{F_{\alpha}: \alpha\in A_{i}\}$ is discrete in $X$ for
each $i=1,2, \dots$\,. We set $F_i=\bigcup \{F_{\alpha}: \alpha\in A_{i}\}$. We
must prove that the space $X$ has an everywhere f-system. Then Theorem~\ref{t2.3} will follow from
Theorems~\ref{t1.5}, \ref{t1.6}, and \ref{t1.7}. To prove the existence of an  everywhere f-system in $X$, we
will show that the space $X$ is almost semicanonical, i.e.,
that  each pair $(X, F_{i})$ is semicanonical. After that, the existence of an f-system and an everywhere
f-system in $X$ will follow from Theorem~\ref{t1.27}. According to Statement~\ref{s1.26}, to prove that
each pair $(X, F_{i})$ is semicanonical, it suffices to show that
the pair $(X, F_{\alpha})$ is semicanonical for all
$\alpha\in A_{i}$ and $i=1,2,\dots$\,.

  The proof of Theorem~\ref{t2.3} relies on the same basic methods as that of Theorem~\ref{t2.1}.

  Recall that, by Theorem~\ref{t1.16}, $X$ has a metrizable subspace $M$ such that $M$ is dense in $X$,
$M\cap F_{\alpha}$ is dense in $F_{\alpha}$ for each $\alpha\in A$, and $M$
is  developable in~$X$.

   According to Remark~\ref{r1.18}, there exists a weak special condensation $f\colon X\to M_0$
 of the given space $X$ onto a metric space $M_0$ (see Definition~\ref{d1.19}), which has the following
properties:
\begin{itemize}
\item
$f(\gamma)$ is a  closed $\sigma$-discrete network in $M_0$ and $f(\gamma_i)$ is discrete in $M_0$
for each $i=1,2, \dots$;
\item
the subspace $M$ of $X$ is homeomorphic to the subspace $f(M)$ of the metric  space~$M_0$;
\item
there exists a sequence $\{\omega_i: i=1,2,\dots\}$ of open locally finite covers of $X$ such
that  $\diam(f(G))<1/i$ for each $G\in \omega_i$  and $\omega_{i+1} \succ \omega_i$;
\item
$X$ is semimetrizable, and it  admits an Arkhangel'skii semimetric $d(x,y)$ such that
if $x\in M$ and $y\in M$, then $d(x,y)=\rho_0(f(x), f(y))$,
where $\rho_0$ is the metric on $M_0$, and if
$x\in X\setminus M$, then $\diam (O_nx\cap M)<1/n$ (see Construction~\ref{con1.20}).
\end{itemize}

We fix an $i\in \mathbb N$ and an $\alpha\in A_i$. Let us show that the pair $(X, F_{\alpha})$
is semicanonical. By analogy with Statement~\ref{s1.28}, we construct an  open
cover of the open set $M_0\setminus f(F_{\alpha})$ in $M_0$ by taking
the open neighborhood $Ox=O_{r(x)}x$, where $r(x)=\rho_0(x,f(F_{\alpha}))/4$,
of each point $x\in fM\setminus f(F_{\alpha})$. Given
any metric space $Z$ with metric $d$, the function $l(z)=d(z,A)$,  $z\in Z$, is continuous
on $Z$; therefore, since the set
$M\cap (M_0\setminus f(F_{\alpha}))$ is dense in $M_0\setminus f(F_{\alpha})$, it follows that
$$
\varpi_{\alpha}=\{O_{r(x)}x: x\in f(M)\setminus f(F_{\alpha}),  r(x)=\rho_0(x,f(F_{\alpha}))/4\}
$$
is an open cover of the open set $M_0\setminus f(F_{\alpha})$. Note that, for every point
$x\in f(M)\setminus f(F_{\alpha})$, we
have $\rho_0(x,f(F_{\alpha})) = \rho_0(x,f(F_{\alpha}\cap M))$, because $F_i\cap M $
is dense in $F_{\alpha}$.

Consider the open cover
$$
\nu_{\alpha}=f^{-1}(\varpi_{\alpha}) =
\{f^{-1}(O_{r(x)}x): x\in (f(M)\setminus f(F_{\alpha}),  r(x)=\rho_0(x,f(F_{\alpha}))/4\}
$$
of $X\setminus F_{\alpha}$. By construction the open cover $\nu_{\alpha}\cap M$ of the open subspace
$M\setminus F_{\alpha}$ is semicanonical for the pair $(M, F_{\alpha}\cap M)$ (the set
$F_{\alpha}\cap M$ is closed in $M$). Let us show that the
cover $\nu_\alpha$ of $X\setminus F_{\alpha}$ is semicanonical for the pair $(X, F_{\alpha})$.

Take any closed subset $\widetilde\Phi$ of $X$ disjoint from $F_{\alpha}$. The set
$\widetilde \Phi$ has a neighborhood $O\widetilde \Phi$ such that
$\overline{O{\widetilde\Phi}}\cap F_{\alpha}=\varnothing$.
Let  $UF_{\alpha}$ be an open neighborhood of $F_\alpha$ in $X$ satisfying the condition
in Definition~\ref{d1.21}, namely, such that  $G\subseteq (X\setminus \overline{O{\widetilde\Phi}})$, i.e.,
$G\cap \overline{O{\widetilde\Phi}}=\varnothing$, for any element $G$ of $\nu_{\alpha}$
intersecting $ UF_{\alpha}$. Clearly, in this case, we have $G\cap \widetilde\Phi=\varnothing$. Let us denote
the closed set $\overline{O{\widetilde\Phi}}$ by $\Phi$. Then $\Phi$ is a regular closed set and
$\Phi\cap M$ is dense in~$\Phi$.

Recall that on the metrizable subspace $M$ of $X$ the metric $\rho(x,y)=\rho_0(f(x),f(y))$ is defined
(for $x,y\in M$). Let $y\in M$. We refer to the distance $\rho(y,F_\alpha\cap M)$  in the metric of $M$
as the \emph{distance from the point
$y\in M$ to the closed subset  $F_\alpha$ of $X$} and denote it by $\rho(y,F_\alpha)$; thus,
$\rho(y,F_\alpha)=\rho(y,F_\alpha\cap M)$ and $\rho(y,\Phi)=\rho(y,\Phi\cap M)$ for $y\in M$.
Consider the function
$$
g_\alpha(y)=\rho(y,F_\alpha)/(\rho(y,F_\alpha)+\rho(y,\Phi))
$$
on the metric subspace $M$ of $X$. It is defined  and continuous on $M$, because both functions
$\rho(y,F_\alpha)$ and
$\rho(y,\Phi)$ are continuous on $M$, since these are distances from points to closed sets in the metric space
$M$. The function $g_\alpha(y)$ takes  $M$ to $I=[0,1]$; we have $g_\alpha(y)=0$ if and only if
$y\in (F_\alpha\cap M)$ and $g_\alpha(y)=1$ if and only if $y\in (\Phi\cap M)$. It follows from the definition
of the function $g_\alpha(y)$ that $g_\alpha(y)<\rho(y,F_\alpha)$ if $y\notin\Phi\cap M$.

 The subset of $M$ on which $g_\alpha(y)<1/6$ is open in $M$ (because $g_\alpha$ is continuous on $M$),
and  it contains $F_\alpha\cap M$. Let us denote this subset by $U_M(F_\alpha\cap M)$.
This is an open subset of $M$ containing
only points of $M$. Take any point $y\in U_M(F_\alpha\cap M)$. We have $g_\alpha(y)<1/6$, i.e., $\rho(y,F_\alpha)
/(\rho(y,F_\alpha)+\rho(y,\Phi))<1/6$, whence $\rho(y,\Phi)>5\rho(y,F_\alpha)$.
It follows from the definition of the
distance from a point to a closed set and the triangle inequality on the metric space $M$
that there exists a point $y_1\in (F_\alpha\cap M)$ for which $\rho(y_1, \Phi) > 4\rho(y_1, y)$. Take the
neighborhood $O_{r({y_1})}y_1$ of $y_1$, where $r(y_1)=\rho(y_1,\Phi)/4$. We
have  $y\in O_{r({y_1})}y_1$.

 The function $g_\alpha$ is defined and continuous on $M$. We extend it to
a continuous function $\tilde g_\alpha$ on $M\cup F_\alpha$ by setting $\tilde g_\alpha(y)=0$
for each point $y\in F_\alpha\setminus M$ and $\tilde g_\alpha(y)=g_\alpha(y)$ for
$y\in M$. By definition, $\tilde g_\alpha$ takes  $M\cup F_\alpha$ to $I=[0,1]$.
It is continuous on $M\cup F_\alpha$; this is proved in the same way as the continuity of
the corresponding function in Theorem~\ref{t2.1} by using \cite[Problem~288, Chap.~2, Sec.~4]{13} and
the neighborhoods $O_nx$ with property~(1.20), which form a local base of $X$ at $x\in X\setminus M$
in Construction~\ref{con1.20}.

  The function $\tilde g_{\alpha}(y): M\cup  F_{\alpha}\to I$, where $I=[0,1]$, is continuous on
the subspace $M\cup  F_{\alpha}$. Consider the  set $\tilde g_\alpha^{-1}[0,1/6)$, which is open in
$M\cup  F_\alpha$. We denote it by
$\widetilde U F_\alpha$. This set is an open neighborhood of  $F_\alpha$ in $M\cup  F_\alpha$.  By construction,
$F_\alpha\cup U_M(F_\alpha\cap M)=\widetilde UF_\alpha$. Let $UF_\alpha$ be an open set in $X$ for which
$UF_\alpha\cap M\cup  F_\alpha =
\widetilde U F_i$.

 The open neighborhood  $UF_\alpha$ of the closed set $F_\alpha$ in $X$ is as required in Definition~\ref{d1.21};
namely, if $G\in \nu_\alpha$ and $G\cap UF_\alpha \neq \varnothing$, then
$G\subseteq X\setminus \overline{\widetilde O{\Phi}}$, i.e., $G\cap \overline{\widetilde O{\Phi}}=\varnothing$.
This follows from the properties of $U_M(F_\alpha\cap M)$ mentioned above. Clearly,  we also have
$G\cap \widetilde\Phi=\varnothing$. Thus, the cover $\nu_\alpha$ of the open set $X\setminus F_\alpha$ is
semicanonical for the pair $(X,F_\alpha)$.

   We have proved that every pair $(X, F_{\alpha})$, $\alpha\in A$,  is semicanonical in $X$.
Let $F_i=\bigcup \{F_{\alpha}: \alpha\in A_{i}\}$, $i=1,2,\dots$\,. Then, by virtue of
Statement~\ref{s1.26}, each  pair $(X, F_{i})$ is semicanonical in $X$. Thus,
$X$ is an almost semicanonical space, so that it has an everywhere f-system
and an f-system by Theorem~\ref{t1.27}.

   This completes the proof of Theorem~\ref{t2.3}.
\end{proof}

Theorem~\ref{t2.3}\,(d) implies the following assertion.

 \begin{corollary}
\label{c2.4}
Let  $X$  be a first countable paracompact $\sigma$-space. Then $\dim X=\Ind X$ and $X$ is an S-space.

\end{corollary}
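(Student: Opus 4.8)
The plan is to obtain both conclusions directly from Theorem~\ref{t2.3} and the auxiliary machinery it rests on, with essentially no additional argument.

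For the equality $\dim X=\Ind X$, I would use only the equivalence of conditions (a) and (b) in Theorem~\ref{t2.3}. By the standing assumptions of the paper both $\Ind X$ and $\dim X$ are finite, so I may set $n=\Ind X$ and $m=\dim X$ with $n,m<\infty$. Instantiating the equivalence at $n=\Ind X$, condition (b) holds, hence (a) holds and $\dim X\le\Ind X$; instantiating it at $m=\dim X$, condition (a) holds, hence (b) holds and $\Ind X\le\dim X$. The two inequalities together give $\dim X=\Ind X$.

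For the claim that $X$ is an S-space, I would not re-prove anything but instead point back to the body of the proof of Theorem~\ref{t2.3}. What that proof actually verifies, before quoting (a)--(d), is that every pair $(X,F_\alpha)$ is semicanonical and hence, via Statement~\ref{s1.26}, that every pair $(X,F_i)$ is semicanonical; that is, $X$ is an \emph{almost semicanonical} space in the sense of Definition~\ref{d1.24}. Theorem~\ref{t1.27} then applies word for word: every almost semicanonical space is an S-space (and simultaneously possesses an f-system and an everywhere f-system, which is precisely what powers conditions (a)--(d)). Therefore $X$ is an S-space, and the corollary is proved.

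The argument has no genuine obstacle; it is bookkeeping on top of the substantive Theorem~\ref{t2.3}. The one spot deserving care is passing from an equivalence phrased with inequalities $\le n$ to a true \emph{equality} of dimensions: this is legitimate only because both $\Ind X$ and $\dim X$ are known to be finite, and it requires applying the equivalence in both directions rather than at a single value of $n$. I would also remark that the route hinted at by the reference to assertion~(d)---realizing $X$ as a perfect ${\le}(n+1)$-to-$1$ image of a strongly zero-dimensional first countable $\sigma$-space---would additionally need a preservation statement for the S-property under such maps, which is not among the results collected here; for that reason I extract the S-space conclusion from the almost semicanonical structure through Theorem~\ref{t1.27} rather than from~(d).
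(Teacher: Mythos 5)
Your proposal is correct and follows essentially the same route as the paper: the dimension equality comes from the equivalence of (a) and (b) in Theorem~\ref{t2.3} (legitimate because both dimensions are finite by the paper's standing assumptions), and the S-space property comes from the fact, established in the body of the proof of Theorem~\ref{t2.3}, that $X$ is almost semicanonical, combined with Theorem~\ref{t1.27}. The paper compresses all of this into a one-line citation of Theorem~\ref{t2.3}\,(d), and your observation that assertion (d) by itself would not yield the S-space conclusion (absent a preservation result for perfect maps) is a fair reading of why the almost semicanonical structure, not (d), is the actual source of that claim.
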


Note: We proved in the proof of Theorem 2.3 that for any closed $\sigma$-discrete network $\gamma$=\{$F_{\alpha}$\} each pair (X, $F_{\alpha}$)  is semicanonical in the first countable paracompact $\sigma$-space $X$. From here we immediately obtain Corollary 2.5 in a trivial and obvious way (let us take any closed set F in X and add it to the network $\gamma$, then the pair (X, F) will be semicanonical in X, because $\gamma$$\bigcup$F is $\sigma$-discrete network in $X$).

\begin{corollary}
\label{c2.5}
Let $X$ be first countable paracompact $\sigma$-space. Then the space X is semicanonical.

\end{corollary}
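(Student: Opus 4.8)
The plan is to reduce the statement to what has already been established inside the proof of Theorem~\ref{t2.3}. Recall that, by Definition~\ref{d1.21}, $X$ is a \emph{semicanonical space} precisely when the pair $(X,A)$ is semicanonical for \emph{every} closed subset $A$ of $X$. So the task is to upgrade the network-element-wise conclusion of Theorem~\ref{t2.3} to an arbitrary closed set.

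First I would record the key structural observation about the proof of Theorem~\ref{t2.3}: what that proof actually shows is that, for the given closed $\sigma$-discrete network $\gamma=\{F_{\alpha}:\alpha\in A\}$, \emph{each} individual pair $(X,F_{\alpha})$ is semicanonical. Inspecting the argument, it never uses any feature of $\gamma$ beyond the facts that $\gamma$ is a closed $\sigma$-discrete network and that, via Theorem~\ref{t1.16} and Remark~\ref{r1.18}, it furnishes a dense metrizable subspace $M$ with $M\cap F_{\alpha}$ dense in each $F_{\alpha}$ together with a weak special condensation $f\colon X\to M_0$ satisfying properties (i)--(iv). Consequently the same construction of the semicanonical cover $\nu_{\alpha}$ applies verbatim to \emph{any} closed $\sigma$-discrete network in $X$, and produces, for each of its elements $F_{\alpha}$, a cover semicanonical for the pair $(X,F_{\alpha})$.

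The decisive step is then to absorb an arbitrary closed set into such a network. Let $F$ be any closed subset of $X$. Since $X$ is a first countable paracompact $\sigma$-space, it carries a closed $\sigma$-discrete network $\gamma=\bigcup_{i=1}^{\infty}\gamma_i$ with each $\gamma_i$ discrete. The one-element family $\{F\}$ is trivially discrete, so
$$
\gamma'=\gamma\cup\{F\}
$$
is again a countable union of discrete families of closed sets, hence a closed $\sigma$-discrete network in $X$ (adding a single closed set to a network keeps it a network). Running the argument of Theorem~\ref{t2.3} with $\gamma'$ in place of $\gamma$, and letting $F$ play the role of a distinguished network element, shows that the pair $(X,F)$ is semicanonical.

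Since $F$ was an arbitrary closed subset of $X$, Definition~\ref{d1.21} is satisfied and $X$ is a semicanonical space. I do not expect any serious obstacle here: the only point needing care is the routine verification that the construction of Theorem~\ref{t2.3} depends on the chosen network solely through its general structural properties (closed, $\sigma$-discrete, density of $M\cap F_{\alpha}$ in $F_{\alpha}$, existence of the weak special condensation), so that it may legitimately be re-applied to the augmented network $\gamma'$.
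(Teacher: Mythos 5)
Your proposal is correct and matches the paper's own argument: the paper likewise observes that the proof of Theorem~\ref{t2.3} establishes semicanonicity of $(X,F_{\alpha})$ for \emph{any} closed $\sigma$-discrete network, and then handles an arbitrary closed set $F$ by adjoining it to the network, since $\gamma\cup\{F\}$ is still a closed $\sigma$-discrete network. Your added remark that one must check the construction depends only on the structural properties of the network (via Theorem~\ref{t1.16} and Remark~\ref{r1.18}) is exactly the right point of care, and it holds.
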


  Recall that every stratifiable space is a paracompact $\sigma$-space \cite{8}. First countable stratifiable
spaces are called Nagata spaces. Theorem~\ref{t2.3} has the following corollary.

\begin{corollary}
\label{c2.6}
For any  Nagata space $X$, the following conditions are equivalent:
\begin{enumerate}
\item[(a)]
$\dim X\leq  n$\textup;
\item[(b)]
$\Ind X\leq  n$\textup;
\item[(c)]
$X=\bigcup\{X_i: i=1,2,\dots n+1\}$, where each $X_i$  is a $G_\delta$  set and  $\dim {X_i}\leq0$
for $i=1,2,\dots n+1$\textup;
\item[(d)]
$X$ is the image of a strongly zero-dimensional first countable paracompact $\sigma$-space under a perfect
${\le}(n+1)$-to-$1$ map; in other words, there exists a first countable paracompact $\sigma$-space $X_0$ and
a closed map $f\colon X_0 \to X$ such that $\dim X_0=0$ and $|f^{-1}(x)|\leq n+1$ for each $x\in X$.
\end{enumerate}
\end{corollary}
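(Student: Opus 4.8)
The plan is to deduce Corollary~\ref{c2.6} directly from Theorem~\ref{t2.3} by observing that its hypothesis is merely a special case of the theorem's hypothesis. Indeed, a Nagata space is by definition a first countable stratifiable space, and, as recalled immediately before the statement, every stratifiable space is a paracompact $\sigma$-space \cite{8}. Hence every Nagata space is simultaneously first countable and a paracompact $\sigma$-space, i.e., a first countable paracompact $\sigma$-space. Since conditions (a)--(d) of Corollary~\ref{c2.6} are verbatim conditions (a)--(d) of Theorem~\ref{t2.3}, once this class inclusion is in place the equivalences transfer with no further work.

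Concretely, first I would verify the inclusion $\{\text{Nagata spaces}\}\subseteq\{\text{first countable paracompact }\sigma\text{-spaces}\}$. This rests on two ingredients, both already available: the definitional equivalence ``Nagata $=$ first countable $+$ stratifiable'', and the containment of stratifiable spaces in paracompact $\sigma$-spaces cited from \cite{8}. In particular, a stratifiable space is paracompact and possesses a $\sigma$-discrete (hence $\sigma$-closure-preserving) network, so it is a paracompact $\sigma$-space in exactly the sense used throughout the paper.

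Having fixed an arbitrary Nagata space $X$, I would then apply Theorem~\ref{t2.3} to this $X$. As $X$ satisfies the standing hypotheses of that theorem, we obtain at once the equivalence of (a) $\dim X\le n$, (b) $\Ind X\le n$, (c) the decomposition of $X$ into $n+1$ zero-dimensional $G_\delta$ sets, and (d) the representation of $X$ as a perfect ${\le}(n+1)$-to-$1$ image of a strongly zero-dimensional first countable paracompact $\sigma$-space. These are precisely conditions (a)--(d) of the corollary, which completes the argument.

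I do not anticipate a genuine obstacle: the entire content of Corollary~\ref{c2.6} is carried by Theorem~\ref{t2.3}, and the only thing to check is the (standard) membership of Nagata spaces in the class of first countable paracompact $\sigma$-spaces. The single point deserving a word of care is terminological consistency, namely that ``stratifiable $\Rightarrow$ paracompact $\sigma$-space'' is read with the same notion of $\sigma$-space (equivalently, $\sigma$-discrete, $\sigma$-locally finite, or $\sigma$-closure-preserving network) employed in the rest of the paper; but this is exactly the equivalence noted in the Introduction and presents no difficulty.
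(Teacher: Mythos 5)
Your proposal is correct and matches the paper's own treatment exactly: the paper deduces Corollary~\ref{c2.6} immediately from Theorem~\ref{t2.3}, noting that every stratifiable space is a paracompact $\sigma$-space (citing \cite{8}), so a Nagata space (first countable stratifiable) is a first countable paracompact $\sigma$-space to which the theorem applies verbatim.
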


It is easy to show that the space $X_0$ in (d) can be assumed to be a Nagata space. Therefore, condition (d)
can be formulated as follows:

\begin{enumerate}
\item[(d)]
there exists a Nagata space $X_0$ with $\dim X_0=0$ and a surjective closed map $f\colon X_0 \to
X$ such that $|f^{-1}(x)|\leq n+1$ for $x\in X$.
\end{enumerate}

Corollary~\ref{c2.6}\,(d) implies the following assertion.

 \begin{corollary}
\label{c2.7}
Let  $X$ be a Nagata space. Then  $\dim X=\Ind X$ and $X$ is an
S-space.
\end{corollary}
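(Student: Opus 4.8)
The plan is to derive Corollary~\ref{c2.7} as a direct specialization of the results already proved for the wider class of first countable paracompact $\sigma$-spaces. First I would record the class inclusion: a Nagata space is by definition a first countable stratifiable space, and every stratifiable space is a paracompact $\sigma$-space (the fact from~\cite{8} recalled just before Corollary~\ref{c2.6}). Hence every Nagata space $X$ is a first countable paracompact $\sigma$-space, and Theorem~\ref{t2.3}, Corollary~\ref{c2.4}, and Corollary~\ref{c2.6} all apply to it without change.

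For the equality $\dim X=\Ind X$ I would use the equivalence of conditions (a) and (b) in Corollary~\ref{c2.6} (equivalently, in Theorem~\ref{t2.3}) for every value of $n$. Taking $n=\Ind X$ makes (b) hold, whence (a) gives $\dim X\le \Ind X$; taking $n=\dim X$ makes (a) hold, whence (b) gives $\Ind X\le \dim X$. The two inequalities together yield $\dim X=\Ind X$. Equivalently, this is exactly Corollary~\ref{c2.4} read on the subclass of Nagata spaces.

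For the statement that $X$ is an S-space I would not argue afresh but appeal to the structure already exhibited inside the proof of Theorem~\ref{t2.3}: there it is shown that for a closed $\sigma$-discrete network $\gamma=\{F_\alpha\}$ every pair $(X,F_\alpha)$, and hence (by Statement~\ref{s1.26}) every pair $(X,F_i)$, is semicanonical, so that $X$ is almost semicanonical in the sense of Definition~\ref{d1.24}. Theorem~\ref{t1.27} then says that any almost semicanonical space is an S-space, which delivers the second assertion.

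I do not expect a genuinely hard step here, since the corollary is a straight descendant of Theorem~\ref{t2.3} and Corollary~\ref{c2.4}. The only point requiring care is the initial class inclusion \emph{Nagata} $\subseteq$ \emph{first countable paracompact $\sigma$-space}, which rests entirely on the stratifiable $\Rightarrow$ paracompact $\sigma$-space implication cited from~\cite{8}; once that is in hand, both conclusions transfer automatically from the ambient class.
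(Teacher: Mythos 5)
Your proposal is correct and follows essentially the same route as the paper: Corollary~\ref{c2.7} is obtained there by specializing the results for first countable paracompact $\sigma$-spaces (Theorem~\ref{t2.3}, Corollaries~\ref{c2.4} and~\ref{c2.6}) to Nagata spaces via the inclusion \emph{stratifiable} $\Rightarrow$ \emph{paracompact $\sigma$-space}, with the S-space property coming from almost semicanonicity and Theorem~\ref{t1.27}. The only cosmetic difference is that you spell out the $(a)\Leftrightarrow(b)$ bookkeeping for both values of $n$, which the paper leaves implicit.
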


\section{Open Problems}

 In conclusion, we recall several open questions.

As is known~\cite{8}, any Nagata space is an $M_1$ space (that is, a space with a $\sigma$-closure preserving
base).

\smallskip

{\bf 1.}\enspace Let $X$ be an $M_1$ space. Is it true that $\dim X=\Ind X$? A positive answer to
this question would give a positive answer
to the question of whether $\dim X=\Ind X$ for any stratifiable space~$X$.

\smallskip

{\bf 2 {\rm(Arkhangel'skii)}.}\enspace
Let $X$ be a topological group whose underlying space has a countable network. Is it true that $\dim X=\Ind
X=\ind X$?

\smallskip

{\bf 3.}\enspace
Let $X$ be a stratifiable topological group. Is it true that $\dim X=\Ind X$?

\smallskip

{\bf 4.}\enspace Let $X$ be a normal Moore  space (that is, a developable regular space).
Is it true that $\Ind X=\dim X$?

\section*{Acknowledgments}

It is my pleasure to express gratitude to Professor A.~V.~Arkhangel'skii for attention and to Professor
 O.~V.~Sipacheva for useful discussions.

\end{document}